\newtheorem{theorem}{Theorem}[section]
\newtheorem{lemma}[theorem]{Lemma}
\newtheorem{corollary}[theorem]{Corollary}
\newtheorem{proposition}[theorem]{Proposition}
\theoremstyle{definition}
\theoremstyle{remark}
\numberwithin{equation}{section}
\newcommand{\N}{{\mathbb N}}
\begin{document}

\title[Almost primes]{Almost primes and the Banks--Martin conjecture}

\author{Jared Duker Lichtman}
\address{Mathematical Institute, University of Oxford, Oxford, OX2 6GG, UK}

\email{jared.d.lichtman@gmail.com}

\subjclass[2010]{Primary 11N25, 11Y60; Secondary 11A05, 11M32}

\date{December 18, 2019.}


\keywords{$k$-almost prime, primitive set, prime zeta function, Sathe--Selberg theorem}

\begin{abstract}
It has been known since Erd\H os that the sum of $1/(n\log n)$ over
numbers $n$ with exactly $k$ prime factors (with repetition)
is bounded as $k$ varies.  We prove that as $k$ tends to infinity,
this sum tends to 1.  Banks and Martin have conjectured that
these sums decrease monotonically in $k$, and in earlier
papers this has been shown to hold for $k$ up to 3. However, we show
that the conjecture is false in general, and in fact a global minimum occurs at $k=6$.
\end{abstract}

\maketitle

\section{Introduction}

Let $\Omega(n)$ denote the number of prime factors of $n$, counted with repetition. For each $k\ge1$, let $\N_k=\{n:\Omega(n)=k\}$ be the set of $k$-almost primes. The sets $\N_k$ are the prototypical examples of {\it primitive} sets of natural numbers $>1$, i.e., no member of the set divides any other. Erd\H os \cite{Erd} proved that $f(A) := \sum_{n\in A}1/(n\log n)$ is bounded uniformly over all primitive sets $A$. Moreover in 1988, he conjectured that $f(A)\le f(\N_1)=1.636\cdots$ for any primitive set $A$. The current record is $f(A)<e^\gamma=1.781\cdots$, due to Lichtman and Pomerance \cite{LPprim} \cite{LPrace}.

In 2013, Banks and Martin \cite{BM} conjectured that $f(\N_k)>f(\N_{k+1})$ for all $k\ge1$, that is,
\begin{align*}
\sum_{n\in\N_1}\frac{1}{n\log n} > \sum_{n\in\N_2}\frac{1}{n\log n} > \sum_{n\in\N_3}\frac{1}{n\log n} > \cdots.
\end{align*}
Their conjecture may be considered as an extension of Erd\H os', exemplifying the general view that $f(A)$ is sensitive to the prime factorizations of $n\in A$. Indeed, Banks and Martin \cite{BM} showed that, for any sufficiently small set of primes $\mathcal Q$ (e.g., if  $\sum_{p\in\mathcal Q}1/p < 1.74$), the analogous statement holds,
\begin{align}
    f\big(\N_k(\mathcal Q)\big)  >  f\big(\N_{k+1}(\mathcal Q)\big)\qquad\textrm{for all } k\ge1.
\end{align}
Here $A(\mathcal Q)$ denotes the numbers in $A$ composed only of prime factors in $\mathcal Q$. In fact they showed $f\big(\N_k(\mathcal Q)\big) \ge f(A(\mathcal Q))$ for all primitive sets $A$ with $\Omega(n)\ge k$ for each $n\in A$.

Zhang \cite{Zh2} proved that $f(\N_1) > f(\N_k)$ for each $k\ge2$. Lichtman and Pomerance \cite{LPprim} proved that $f(\N_k) \gg 1$. Bayless, Kinlaw, and Klyve \cite{BKK} recently showed that $f(\N_2) > f(\N_3)$, providing bounds on $f(\N_k),f(\N^*_k)$ for small $k$, see \ref{fig:BKK}. Here $\N_k^*$ is the set of {\it squarefree} $k$-almost primes.

\begin{figure}[H]
  \caption{Bounds on $f(\N_k),f(\N^*_k)$ from Bayless et al. \cite{BKK}} \label{fig:BKK}
\begin{align*}
\begin{tabular}{c|l}
$k$ & $f(\N_k)$ \\
\hline 
2 & (1.1416, 1.1484)\\
3 & (0.65708, 1.0841)\\
4 & (0.40713, 1.1891)
\end{tabular}
\begin{tabular}{c|l}
$k$ & $f(\N^*_k)$ \\
\hline
2 & (0.8877, 0.8945)\\
3 & (0.36003, 0.7678)\\
4 & (0.15118, 0.8527)
\end{tabular}
\end{align*}
\end{figure}

Their approach is to directly compute the series up to $10^{12}$, and then obtain explicit inequalities for the counting functions of $\N_k,\N^*_k$. By partial summation, these translate into bounds for $f(\N_k),f(\N^*_k)$. As evidenced by the table, this approach becomes exceedingly difficult as $k$ grows. This is in part due to the fact that the series for $f(\N_k),f(\N^*_k)$ converge quite slowly. For example, as we shall see, the partial sum up to $10^{12}$ makes up less than half of $f(\N_4)$.

Even in the case $k=1$ for primes, the series for $f(\N_1)$ converges slowly. Nevertheless, Cohen \cite{Cohen} was able to compute to a remarkable degree of precision,
\begin{align}
f(\N_1) \ = \ 1.63661632335126086856965800392186367118159707613129\cdots.
\end{align}
His basic idea is to write
\begin{align*}
f(\N_1) = \sum_p \frac{1}{p\log p} =\int_1^\infty P(s)\;ds,
\end{align*}
where $P(s) = \sum_p p^{-s}$ is the prime zeta function. Since $\log\zeta(s)=\sum_{m\ge1}P(ms)/m$, by M\"obius inversion one has the rapidly converging series $P(s) = \sum_{m\ge1}(\mu(m)/m)\log\zeta(ms)$, and in turn one may use well-known rapid computation of $\zeta(s)$.

\section{Statement of results}

First, by extending the zeta function method initiated by Cohen we generate the following data in Figure \ref{fig:fNk}, refining Figure \ref{fig:BKK} from \cite{BKK}.

\begin{figure}[H]
  \caption{Computation of $f(\N_k),f(\N^*_k)$ to 20 digits.} \label{fig:fNk}
\begin{align*}
\begin{tabular}{c|l}
$k$ & $f(\N_k)$ \\
\hline 
2 & 1.1448165734059179915\\
3 & 1.0308351017932175719\\
4 & 0.9973421485952523597\\
5 & 0.9888821921300755349\\
6 & 0.9887534530145096063\\
7 & 0.9910205950027380022\\
8 & 0.9935373386530404095\\
9 & 0.9956203792390954090\\
10& 0.9971495172651382446
\end{tabular}
\begin{tabular}{c|l}
$k$ & $f(\N^*_k)$ \\
\hline
2 & 0.8909254794763183321\\
3 & 0.7131238005098902554\\
4 & 0.6528129098554062569\\
5 & 0.6284306642973934048\\
6 & 0.6176406880308143497\\
7 & 0.6126252367925047050\\
8 & 0.6102275665474058560\\
9 & 0.6090620642567092069\\
10 & 0.6084897027941833669
\end{tabular}
\end{align*}
\end{figure}
We observe that the sequence $\{f(\N_k)\}_k$ decreases for $k\le 6$ but then increases thereafter, in particular $f(\N_6)<f(\N_7)$, contrary to the conjecture of Banks--Martin.

We also prove the following theorems, which confirm some of the trends observed in the data.
\begin{theorem}\label{thm:globmin}
For all positive integers $k\neq6$, we have $f(\N_6) < f(\N_k)$.
\end{theorem}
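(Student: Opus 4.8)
The plan is to split the statement into two regimes: small $k$ (say $k\le K_0$ for some explicit $K_0$, perhaps around $10$ or $15$) and large $k$, and to treat these with entirely different tools. For the small-$k$ range, the strategy is to make the zeta-function computation that produced Figure~\ref{fig:fNk} rigorous: one writes $f(\N_k)=\int_1^\infty P_k(s)\,ds$, where $P_k(s)=\sum_{\Omega(n)=k}n^{-s}$ is the $k$-fold analogue of the prime zeta function, and one has the generating identity $\sum_{k\ge0}P_k(s)z^k=\prod_p(1-zp^{-s})^{-1}=\exp\big(\sum_{m\ge1}z^m P(ms)/m\big)$, so that $P_k(s)$ is a universal polynomial in the quantities $P(s),P(2s),P(3s),\dots$ Each $P(js)$ is computed to high precision via $P(js)=\sum_{m\ge1}(\mu(m)/m)\log\zeta(mjs)$, and all tails — both the tail of the $s$-integral near $s=1$ and $s\to\infty$, and the tails of the various series — must be bounded explicitly. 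This yields certified intervals for $f(\N_k)$ for each $k\le K_0$ that are narrow enough to verify $f(\N_6)<f(\N_k)$ directly (the data suggests gaps on the order of $10^{-3}$ or more for $k\neq6$ in this range, comfortably larger than what interval arithmetic can certify).

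For the large-$k$ range the computational approach is hopeless, so instead I would prove a clean lower bound $f(\N_k)>f(\N_6)$ valid for all $k\ge K_0$. The natural route is an asymptotic analysis showing $f(\N_k)\to1$ as $k\to\infty$ (this is the other main theorem promised in the abstract) together with a monotonicity-type or quantitative statement controlling the approach. One extracts the asymptotics of $f(\N_k)$ from the Sathe--Selberg machinery: the count of $n\le x$ with $\Omega(n)=k$ behaves, in the relevant ranges of $k$ relative to $\log\log x$, like $\tfrac{x}{\log x}\cdot\tfrac{(\log\log x)^{k-1}}{(k-1)!}\cdot G(\cdots)$ for an explicit analytic factor, and partial summation against $1/(n\log n)$ converts this into the behaviour of $f(\N_k)$. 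Alternatively, and perhaps more cleanly, one works directly with $P_k(s)$: near $s=1$, $P(s)\sim\log\frac1{s-1}$ dominates and $P(js)$ for $j\ge2$ is bounded, so $P_k(s)\approx \frac{1}{k!}P(s)^k\cdot(\text{correction})$, and $\int_1^\infty \frac{1}{k!}(\log\frac{1}{s-1})^k\cdots ds$ can be evaluated asymptotically; the correction factors are controlled uniformly. The upshot should be that $f(\N_k)=1+c/k+O(1/k^2)$ or similar with an explicit constant, from which $f(\N_k)>f(\N_6)\approx0.98875$ follows once $k$ is large, since $f(\N_k)$ approaches $1$ from below in that regime (as the data for $k\ge7$ suggests).

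The step I expect to be the main obstacle is making the large-$k$ bound both uniform and explicit enough to dovetail with the finite computation — that is, choosing $K_0$ so that the asymptotic lower bound provably exceeds $f(\N_6)$ for every $k\ge K_0$ while $K_0$ is still small enough that the certified zeta-function computation can cover all $k\le K_0$. Sathe--Selberg error terms are notoriously awkward to make fully explicit and uniform in $k$, so I would likely avoid counting functions and instead push everything through $P_k(s)$ and contour/real-variable estimates on $\int_1^\infty P_k(s)\,ds$, where the analytic input is just explicit bounds on $\zeta(s)$ and $P(js)$ for $s$ near $1$ and for $s$ large. A secondary technical point is bounding the integrand for $s$ bounded away from $1$, where $P_k(s)$ decays geometrically in $k$ but one still needs a clean uniform estimate; splitting the integral at, say, $s=1+1/k$ or $s=2$ and handling each piece separately should suffice. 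Throughout, care is needed that the interval-arithmetic certification of the small-$k$ values genuinely accounts for every truncation, since the whole argument collapses if, say, $f(\N_6)$ and $f(\N_7)$ are not separated by the computation.
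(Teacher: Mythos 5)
Your two-regime decomposition---a certified zeta-function computation for small $k$ together with an analytic lower bound for large $k$ derived from $P_k(s)$ rather than from Sathe--Selberg---is exactly the structure of the paper's proof. But the step you flag as the main obstacle, ``the correction factors are controlled uniformly,'' is where the argument actually lives, and as sketched it would not get there. Keeping only the dominant partition term gives
\begin{align*}
\frac{1}{k!}\int_1^\infty P(s)^k\,ds \;=\; \alpha + O(2^{-k}), \qquad \alpha = \prod_{m\ge2}\zeta(m)^{\mu(m)/m}\approx 0.729,
\end{align*}
which sits far below $f(\N_6)\approx 0.9888$, so the single ``$P_k(s)\approx P(s)^k/k!$'' approximation is useless as a lower bound, and the heuristic $f(\N_k)=1+c/k+O(1/k^2)$ never materializes from it alone. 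What makes the proof work is that every term in the partition identity
$P_k(s)=\sum_{n_1+2n_2+\cdots=k}\prod_j (P(js)/j)^{n_j}/n_j!$
is nonnegative, so any \emph{partial} sum of its contributions gives a rigorous lower bound on $f(\N_k)$ after integrating; one retains the partitions with one extra part $j\le6$ and those with parts $\{2,j\}$ for $j\le6$, bounds each from below via a first-order Taylor expansion of $P(js)$ at $s=1$ integrated against $\log(\tfrac{\alpha}{s-1})^{k-\cdot}$, and arrives at an explicit $\beta_k$ that is increasing in $k$ with $\beta_{20}>0.99>f(\N_6)$. Without this multi-term bookkeeping, the large-$k$ bound hovers near $0.73$ and can never dovetail with the finite computation. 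A secondary mismatch: your anticipated cutoff $K_0\approx 10$ or $15$ is too optimistic --- the explicit $\beta_k$ only clears $f(\N_6)$ from $k=20$ onward, so the certified computation must cover all $k\le 20$, not just $k\le 15$.
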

\begin{theorem}\label{thm:fNk1}
In the limit as $k\to\infty$, we have
\begin{align}
f(\N_k) \ = \sum_{\Omega(n)=k}\frac{1}{n\log n} \ \sim \ 1 \  \quad \text{and}\quad f(\N^*_k) \ = \sum_{\Omega(n)=k}\frac{\mu(n)^2}{n\log n} \ \sim \ \frac{6}{\pi^2}.
\end{align}
\end{theorem}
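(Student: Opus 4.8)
The plan is to extend to general $k$ the zeta-function method that Cohen used for $k=1$. Since $\int_1^\infty n^{-s}\,\rd s = 1/(n\log n)$ for every $n\ge 2$, Tonelli's theorem gives
\[
f(\N_k)=\int_1^\infty P_k(s)\,\rd s,\qquad f(\N_k^*)=\int_1^\infty P_k^*(s)\,\rd s,
\]
where $P_k(s)=\sum_{\Omega(n)=k}n^{-s}$ and $P_k^*(s)=\sum_{\Omega(n)=k}\mu(n)^2 n^{-s}$. These series are organised by the Euler products $\sum_{k\ge0}P_k(s)z^k=\prod_p(1-zp^{-s})^{-1}$ and $\sum_{k\ge0}P_k^*(s)z^k=\prod_p(1+zp^{-s})$, valid for $|z|<2^s$; taking $z=1$ shows $P_k(s),P_k^*(s)\le\zeta(s)$ for $s>1$. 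The part of the integral with $s\ge 2$ is negligible: $\int_2^\infty P_k(s)\,\rd s=\sum_{\Omega(n)=k}n^{-2}/\log n\le P_k(2)/(k\log 2)\le \zeta(2)/(k\log 2)\to 0$, and likewise for $P_k^*$. So the problem reduces to the behaviour of $\int_1^2 P_k(s)\,\rd s$ as $s\to 1^+$.

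For $1<s\le 2$ write $\log\zeta(s)=L(s)+\ell(s)$ with $L(s)=\log\frac1{s-1}$ and $\ell(s)=\log\big((s-1)\zeta(s)\big)$, so that the Euler product factors as
\[
\sum_{k\ge0}P_k(s)z^k = e^{zL(s)}\,\widetilde H(z,s),\qquad \widetilde H(z,s)=e^{z\ell(s)}\prod_p\frac{(1-p^{-s})^z}{1-zp^{-s}}.
\]
The crucial structural fact is that $\widetilde H$ extends to a continuous function, bounded by some constant $M$, on $\{|z|\le 3/2\}\times[1,2]$: the factor $e^{z\ell(s)}$ is harmless since $\ell$ is continuous on $[1,2]$ with $\ell(1)=0$, while the logarithm of the $p$-th factor of the product equals $\frac12(z^2-z)p^{-2s}+O(p^{-3s})$, so the product converges uniformly there (the denominators staying $\ge 1/4$ in modulus because $p^{-s}\le 1/2$). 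Writing $\widetilde H(z,s)=\sum_{i\ge 0}\widetilde h_i(s)z^i$, Cauchy's estimate yields $|\widetilde h_i(s)|\le M(2/3)^i$ uniformly for $s\in[1,2]$, each $\widetilde h_i$ is continuous, and $\sum_{i\ge 0}\widetilde h_i(1)=\widetilde H(1,1)=\prod_p\frac{1-1/p}{1-1/p}=1$. Comparing coefficients of $z^k$ on the two sides gives $P_k(s)=\sum_{i=0}^k \widetilde h_i(s)\,L(s)^{k-i}/(k-i)!$.

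Now substitute $u=L(s)$, i.e.\ $s=1+e^{-u}$; since $\int_1^2 L(s)^j/j!\,\rd s=\int_0^\infty u^j e^{-u}/j!\,\rd u=1$, each term is absolutely integrable and
\[
\int_1^2 P_k(s)\,\rd s=\sum_{i=0}^k\int_0^\infty \frac{u^{k-i}}{(k-i)!}\,e^{-u}\,\widetilde h_i(1+e^{-u})\,\rd u.
\]
For fixed $i$, the functions $u^{k-i}e^{-u}/(k-i)!$ are probability densities on $(0,\infty)$ whose mass escapes to $+\infty$ as $k\to\infty$, and $\widetilde h_i(1+e^{-u})\to\widetilde h_i(1)$ as $u\to\infty$ by continuity, so the $i$-th summand converges to $\widetilde h_i(1)$; the $k$-uniform bound $M(2/3)^i$ then permits dominated convergence over $i$, giving $\int_1^2 P_k(s)\,\rd s\to\sum_{i\ge0}\widetilde h_i(1)=1$, and hence $f(\N_k)\to 1$. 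The squarefree case runs verbatim with $\prod_p(1-zp^{-s})^{-1}$ replaced by $\prod_p(1+zp^{-s})$; the sole change is the value of the limiting constant, $\widetilde H^*(1,1)=\prod_p(1-1/p)(1+1/p)=\prod_p(1-p^{-2})=6/\pi^2$, so $f(\N_k^*)\to 6/\pi^2$.

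The main obstacle is the uniform control of the Selberg--Delange-type factor $\widetilde H(z,s)$ right up to the singular line $s=1$, with $z$ ranging over a disc of radius strictly greater than $1$: this is precisely what produces a $k$-uniform summable majorant for the coefficients $\widetilde h_i(s)$ (so the limit may be moved inside the sum over $i$) and what pins the limiting constant to the single value $\widetilde H(1,1)$. Conceptually, the mechanism is that $\int_1^2 P_k(s)\,\rd s$ is carried entirely by the range $s-1\asymp e^{-k}$, where $P_k(s)$ behaves like $\frac1{k!}(\log\frac1{s-1})^k\cdot\widetilde H(1,1)$; an alternative derivation through Landau's asymptotic $\#\{n\le x:\Omega(n)=k\}\sim \frac{x}{\log x}\frac{(\log\log x)^{k-1}}{(k-1)!}$ and partial summation leads to the same Gamma-integral $\int_0^\infty u^{k-1}e^{-u}/(k-1)!\,\rd u\to 1$, with the constant again entering through the relevant Dirichlet series at $s=1$.
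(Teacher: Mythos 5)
Your proof is correct, and it takes a genuinely different route from the paper's. The paper proves Theorem \ref{thm:fNk1} (in its quantitative form, Theorem \ref{thm:fNksim1}) by partial summation on the counting function $N_{k+1}(x)$, invoking the Sathe--Selberg theorem as a black box in the bulk range $\log\log x \asymp k$ and the Erd\H{o}s--S\'ark\"ozy universal bound $N_{k+1}(t)\ll (k^4/2^k)\,t\log t$ to dispose of the range $t\le e^{e^{k/1.99}}$; after the substitution $y=\log\log t$ the Gamma integral appears, and the constant enters through $G(1)=1$. You instead work directly on the zeta-function side of the identity $f(\N_k)=\int_1^\infty P_k(s)\,ds$: you factor the generating function $\sum_k P_k(s)z^k=\prod_p(1-zp^{-s})^{-1}$ as $e^{zL(s)}\widetilde H(z,s)$ with $L(s)=\log\tfrac1{s-1}$, use holomorphy of $\widetilde H$ in $|z|\le 3/2$ together with its continuity on $[1,2]$ to get the $k$-uniform geometric majorant $|\widetilde h_i(s)|\le M(2/3)^i$, and then pass to the limit by dominated convergence in the sum over $i$. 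This is essentially a Selberg--Delange argument carried out for the single Dirichlet integral at hand, and it is closer in spirit to the paper's Section 5 (which uses the decomposition $\log\zeta = P + h$ and the identity \eqref{eq:Pkpart} for an upper bound of quality $1+O(k\,2^{-k/2})$), but it actually delivers the two-sided asymptotic directly, something Section 5 does not. The trade-off: your argument is self-contained (it implicitly re-derives the needed piece of Sathe--Selberg from the Euler product), whereas the paper's proof via partial summation hands you the explicit rate $O(k^{-1/2+\epsilon})$ essentially for free; as written your argument gives only qualitative convergence, though the geometric decay of $\widetilde h_i$ and the exponential concentration of $u^{k-i}e^{-u}/(k-i)!$ near $u=k-i$ make it plausible one could extract an exponential rate from it with more bookkeeping (which is exactly the direction the paper pursues, with partial success, via Theorems \ref{thm:intlogzetak}--\ref{thm:Pk2k}). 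One small point worth making explicit when you invoke dominated convergence: extend the sum over $0\le i\le k$ to all $i\ge 0$ by declaring the $i$-th summand to be $0$ for $i>k$; the majorant $M(2/3)^i$ still applies and the limit of each term is $\widetilde h_i(1)$.
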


We shall prove a quantitative form of Theorem \ref{thm:fNk1} in Section \ref{sec:largek}, namely, $f(\N_k)=1+O_\epsilon(k^{\epsilon-1/2})$ for any $\epsilon>0$. The proof uses partial summation and a delicate application of the Sathe-Selberg theorem to $N_k(x) := \#\{n\le x:\Omega(n)=k\}$ in the critical range $x \approx e^{e^k}$.

\begin{theorem}[Sathe-Selberg \cite{SathSelb}]
For $k\le 1.99\log\log x$,
\begin{align*}
N_{k+1}(x) \ = \ G\Big(\frac{k}{\log\log x}\Big)\frac{x}{\log x}\frac{(\log\log x)^k}{k!}\bigg(1 \ + \ O\Big(\frac{k}{(\log\log x)^2}\Big)\bigg),
\end{align*}
where $G(z) = \frac{1}{\Gamma(1+z)}\prod_p(1-z/p)^{-1}(1-1/p)^z$.
\end{theorem}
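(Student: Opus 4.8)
The plan is to prove this classical estimate by the Selberg--Delange method, realising $N_{k+1}(x)$ as a coefficient of the generating Dirichlet series
\[
F(s,z)\ :=\ \sum_{n\ge1}\frac{z^{\Omega(n)}}{n^s}\ =\ \prod_p\Big(1-\frac{z}{p^s}\Big)^{-1},\qquad \Re s>1,
\]
so that, writing $\Sigma(x,z):=\sum_{n\le x}z^{\Omega(n)}=\sum_{j\ge0}N_j(x)z^j$, one has $N_{k+1}(x)=\frac{1}{2\pi i}\oint_{|z|=r}\Sigma(x,z)\,z^{-k-2}\,dz$ for every $r>0$. The two ingredients are (i) a good asymptotic for $\Sigma(x,z)$ valid uniformly for $|z|$ in a fixed disc of radius $<2$, and (ii) a saddle-point extraction of the $z^{k+1}$-coefficient.

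For (i), I factor $F(s,z)=\zeta(s)^z H(s,z)$ with $H(s,z)=\prod_p(1-zp^{-s})^{-1}(1-p^{-s})^z$. The $p$-th local factor of $H$ is $1+\tfrac{z(z-1)}{2}p^{-2s}+\cdots$, so $H(s,z)$ is holomorphic and bounded on $\{\Re s>1/2\}$ for $|z|$ in a fixed disc, apart from the real poles of the finitely many factors $(1-zp^{-s})^{-1}$ with $\sqrt p<|z|$ (only $p=2,3$ when $|z|<2$), which lie at $\Re s=\log|z|/\log p<1$. Note $H(1,z)=\prod_p(1-z/p)^{-1}(1-1/p)^z$, so $G(z)=H(1,z)/\Gamma(1+z)$. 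I then apply Perron's formula and deform the contour onto a Hankel-type path around the branch point $s=1$ (kept to the right of $\Re s=1/2$, using standard convexity bounds for $\zeta$ on vertical lines together with its zero-free region). Since $\zeta(s)^z=(s-1)^{-z}(1+O(|s-1|))$ and $H(s,z)=H(1,z)(1+O(|s-1|))$ near $s=1$, Hankel's integral for $1/\Gamma$ yields, with $L:=\log\log x$,
\[
\Sigma(x,z)\ =\ \frac{H(1,z)}{\Gamma(z)}\,x(\log x)^{z-1}\Big(1+O\big(\tfrac1{\log x}\big)\Big)+O\big(x^{1-\delta}\big)\ =\ \frac{x}{\log x}\,z\,G(z)\,e^{zL}\Big(1+O\big(\tfrac1{\log x}\big)\Big)+O\big(x^{1-\delta}\big),
\]
uniformly for $|z|\le1.995$, the $O(x^{1-\delta})$ accounting for the auxiliary poles and the tails of the contour.

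For (ii), I take $r=z_0:=(k+1)/L=k/\log\log x+O(1/\log\log x)$, which lies below $2$ precisely because $k\le1.99\log\log x$ (for $x$ large), so the asymptotic of (i) applies on $|z|=r$; a short Stirling estimate, using $e^{-k}\ge(\log x)^{-1.99}$, shows the $O(x^{1-\delta})$ term contributes negligibly after integrating against $z^{-k-2}$. Hence
\[
N_{k+1}(x)\ =\ \frac{x}{\log x}\,[z^k]\big(G(z)e^{zL}\big)\Big(1+O\big(\tfrac1{\log x}\big)\Big)+\text{(negligible)},
\]
and I estimate $[z^k](G(z)e^{zL})=\frac{1}{2\pi i}\oint_{|z|=z_0}G(z)e^{zL}z^{-k-1}dz$ by the saddle-point method, the saddle sitting at $z_0\approx k/L$. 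Writing $G(z)=\sum_j g_j z^j$ one has $[z^k](G(z)e^{zL})=\frac{L^k}{k!}\sum_{j\le k}g_j(k)_jL^{-j}$ with $(k)_j=k(k-1)\cdots(k-j+1)$; replacing $(k)_j$ by $k^j$, with relative error $\ll j^2/k$ over the effective range $j\asymp k/L$, gives $[z^k](G(z)e^{zL})=\frac{L^k}{k!}G(k/L)(1+O(k/L^2))$, which is the theorem.

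The hard part is carrying the error terms uniformly in $k$ across the whole range $k\le1.99\log\log x$ while achieving the sharp relative error $O(k/(\log\log x)^2)$: one must control $H(s,z)$ both near $s=1$ and out to $|z|$ near $1.99$, confirm the poles from $p=2,3$ and the Hankel tails are genuinely negligible there, and carry out the coefficient extraction tightly enough that the $k/(\log\log x)^2$ term --- which is exactly the contribution of the width of the saddle --- is captured without being overestimated. The threshold $k\le1.99\log\log x$ is precisely what keeps the saddle $z_0\approx k/\log\log x$ bounded away from $2$, the value at which the $p=2$ Euler factor develops a pole at the branch point $s=1$ and the argument degenerates.
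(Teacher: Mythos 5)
The paper does not prove this theorem: it is quoted with attribution and a citation to Selberg's 1954 note, and used as a black box in the proof of Theorem \ref{thm:fNksim1} (the local behavior of $G$ near $1$ is separately sourced to Montgomery--Vaughan). So there is no in-paper argument to compare against. Your sketch is the standard modern Selberg--Delange treatment, and its architecture is sound: the factorization $F(s,z)=\zeta(s)^zH(s,z)$, the Hankel-contour asymptotic for $\Sigma(x,z)$ uniformly on a disc $|z|\le 1.995$, and Cauchy coefficient extraction with saddle at $z_0\approx k/\log\log x$. Your explanation of the threshold $k\le1.99\log\log x$ --- keeping $z_0$ away from $z=2$, where the $p=2$ local factor of $H(s,z)$ develops a pole colliding with the branch point of $\zeta(s)^z$ at $s=1$ --- is the correct reason for the range restriction.

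The one step that is loose enough to miss the stated error term is the final coefficient estimate. You write $[z^k]\bigl(G(z)e^{zL}\bigr)=\frac{L^k}{k!}\sum_{j\le k}g_j(k)_jL^{-j}$ and assert a per-term relative error $\ll j^2/k$ ``over the effective range $j\asymp k/L$.'' Since $G$ is analytic in $|z|<2$ the coefficients $g_j$ decay geometrically, so the sum is effectively supported on $j=O(1)$, not $j\asymp k/L$; and the crude bound $|(k)_j-k^j|\ll j^2k^{j-1}$ applied at $j=1$ would already suggest a relative error of size $|g_1|/L$, which is $O(1/L)$ and hence \emph{larger} than the target $O(k/L^2)$ throughout the range $k=o(\log\log x)$. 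The fix is to observe that $(k)_j=k^j$ exactly for $j\le 1$, so the first discrepancy occurs at $j=2$ and contributes $\asymp|g_2|(k/L)^2/k\asymp k/L^2$, with the geometric decay of $g_j$ (together with the trivial bound $(k)_j\le k^j$ for $j>\sqrt k$) controlling all larger $j$. With that repair the outline does deliver the theorem.
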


This approach is sufficiently versatile to generalize to arithmetic progressions. However, as we shall see throughout the paper, the zeta function method offers a much more precise understanding of $f(\N_k)$. By this method, we shall prove Theorem \ref{thm:globmin} as well as obtain an {\bf exponentially decaying} upper bound of $1 + O(k\,2^{-k/2})$, which dramatically refines the results by classical methods. 
Further, we expect zeta functions may lead to lower bounds of comparable strength, though the problem here is more subtle.

In the next section, we describe the zeta function method and show how to generate the data in Figure \ref{fig:fNk} for small $k$.

\section{Zeta function method for small $k$}

Consider the prime and $k$-almost prime zeta functions
\begin{align*}
P(s) = \sum_p \frac{1}{p^s} \qquad P_k(s) = \sum_{\Omega(n)=k} \frac{1}{n^s} \qquad P^*_k(s) = \sum_{\Omega(n)=k} \frac{\mu(n)^2}{n^s}
\end{align*}
for $k\ge1$. Note $P_1=P_1^*=P$, and let $P_0(s)=1$. Our interest in these zeta functions arises from the identity
\begin{align}\label{eq:ftozeta}
f(\N_k) = \sum_{\Omega(n)=k}\frac{1}{n\log n} =  \sum_{\Omega(n)=k}\int_1^\infty\frac{ds}{n^s} = \int_1^\infty P_k(s)\;ds
\end{align}
Here $f(\N_k)$ is finite, and so by Tonelli's theorem the interchange of sum and integral in \eqref{eq:ftozeta} is justified because all terms are positive. Similarly we have $f(\N^*_k) = \int_1^\infty P^*_k(s)\;ds$.

In the following proposition, we express $P_k$ explicitly in terms of $P$ and derive a handy recursion formula.\footnote{This was stated with incomplete proof in a preprint \cite{Mathar}. We provide a full proof by a simpler method.}
\begin{proposition}\label{prop:Pkpart}
We have
\begin{align}\label{eq:Pkpart}
P_k(s) = \sum_{n_1 + 2n_2 +\cdots + kn_k = k} \prod_{j=1}^k \frac{1}{n_j!}\big(P(js)/j\big)^{n_j}
\end{align}
where the sum ranges over all partitions of $k$. $P_k$ satisfies the recurrence
\begin{align}\label{eq:Pkrecur}
P_k(s) = \frac{1}{k}\sum_{j=1}^k P(js)P_{k-j}(s).
\end{align}
Similarly, $P_k^*$ satisfies
\begin{align}\label{eq:Pksq}
P^*_k(s) \ = \ (-1)^k\sum_{n_1 + 2n_2 +\cdots = k} \prod_{j=1}^k \frac{1}{n_j!}\big(-P(js)/j\big)^{n_j} \ = \ \frac{1}{k}\sum_{j=1}^k (-1)^{j+1}P(js)P^*_{k-j}(s).
\end{align}
\end{proposition}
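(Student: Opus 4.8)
The natural starting point is the Euler-product / generating-function identity for the $k$-almost-prime zeta functions. Since every $n$ with $\Omega(n)=k$ is determined by its multiset of prime factors, grouping primes by multiplicity gives, for a formal variable $u$,
\begin{align*}
\sum_{k\ge0} P_k(s)\,u^k \ = \ \prod_p \Big(1 + \frac{u}{p^s} + \frac{u^2}{p^{2s}} + \cdots\Big) \ = \ \prod_p\Big(1-\frac{u}{p^s}\Big)^{-1}.
\end{align*}
Taking the logarithm and expanding $-\log(1-u/p^s)=\sum_{j\ge1}(u^j/j)p^{-js}$, then summing over $p$, yields $\sum_{k\ge0}P_k(s)u^k = \exp\!\big(\sum_{j\ge1}(u^j/j)P(js)\big)$. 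First I would expand the exponential as a product of exponentials $\prod_j \exp(u^j P(js)/j)$, extract the coefficient of $u^k$ by choosing the term $(u^j P(js)/j)^{n_j}/n_j!$ from the $j$-th factor, and observe that the constraint on the exponents of $u$ is exactly $\sum_j j\,n_j = k$: this gives \eqref{eq:Pkpart}. For \eqref{eq:Pkrecur}, I would differentiate the generating-function identity $F(u):=\sum_k P_k(s)u^k = \exp\big(\sum_j (u^j/j)P(js)\big)$ with respect to $u$, obtaining $F'(u) = F(u)\sum_{j\ge1}u^{j-1}P(js)$; comparing the coefficient of $u^{k-1}$ on both sides gives $k\,P_k(s) = \sum_{j=1}^{k}P(js)P_{k-j}(s)$, which is \eqref{eq:Pkrecur}. (Convergence is not an issue: for $\Re s>1$ all series converge absolutely, and the combinatorial identities hold as identities of formal power series in $u$ in any case.)

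For the squarefree variant, the only change is that each prime contributes at most one factor, so the generating function is the finite product $\sum_{k\ge0}P_k^*(s)u^k = \prod_p\big(1+u/p^s\big) = \exp\!\big(\sum_{j\ge1}\frac{(-1)^{j+1}}{j}u^j P(js)\big)$, using $\log(1+x)=\sum_{j\ge1}(-1)^{j+1}x^j/j$. Extracting the coefficient of $u^k$ exactly as before — now with $(-1)^{j+1}$ in place of $1$ inside each factor, i.e. replacing $P(js)/j$ by $-(-P(js)/j)$ — gives the partition formula in \eqref{eq:Pksq}; pulling out the sign shows the product of the $(-1)$'s over the chosen parts contributes $(-1)^{\sum n_j \cdot ?}$, and a short check using $\sum_j j n_j = k$ reconciles it with the global $(-1)^k$ prefactor and the $(-1)^{j+1}$ inside. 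Differentiating $G(u):=\prod_p(1+u/p^s)$, or equivalently its logarithm, gives $G'(u)=G(u)\sum_{j\ge1}(-1)^{j+1}u^{j-1}P(js)$, and comparing coefficients of $u^{k-1}$ yields the recurrence $k\,P_k^*(s)=\sum_{j=1}^k (-1)^{j+1}P(js)P_{k-j}^*(s)$.

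The one genuinely fiddly point is the bookkeeping of signs in the squarefree partition identity: one must verify that $(-1)^k \prod_j (-P(js)/j)^{n_j} = \prod_j \big((-1)^{j+1}P(js)/j\big)^{n_j}$ for every partition with $\sum_j j n_j = k$. This is the identity $(-1)^k \cdot (-1)^{\sum_j n_j} = \prod_j (-1)^{(j+1)n_j} = (-1)^{\sum_j (j+1)n_j}$, i.e. $(-1)^{k+\sum n_j} = (-1)^{\sum j n_j + \sum n_j} = (-1)^{k + \sum n_j}$, which holds since $\sum_j j n_j = k$; so the two expressions in \eqref{eq:Pksq} agree and both equal the coefficient of $u^k$ in $\prod_p(1+u/p^s)$. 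Everything else is routine coefficient extraction, and I expect the whole proof to be short. I would also remark that \eqref{eq:Pkrecur} with $P_0=1$ lets one compute $P_k(s)$ recursively from values of $P(js)$, which is what is used to generate Figure \ref{fig:fNk}.
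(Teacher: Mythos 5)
Your proof of the partition formula \eqref{eq:Pkpart} and of the first equality in \eqref{eq:Pksq} is essentially identical to the paper's: write the Euler product as $\exp\big(\sum_{j\ge1}(u^j/j)P(js)\big)$ (resp.\ with alternating signs in the squarefree case), expand as a product of exponentials, and read off the coefficient of $u^k$. Your explicit sign check reconciling $(-1)^k\prod_j(-P(js)/j)^{n_j}$ with $\prod_j\big((-1)^{j+1}P(js)/j\big)^{n_j}$ is correct and is a useful bit of bookkeeping that the paper handles implicitly by working with $P_k^\mu(s)=(-1)^kP_k^*(s)$ throughout.

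Where you genuinely diverge from the paper is the proof of the recurrences \eqref{eq:Pkrecur} and the second equality in \eqref{eq:Pksq}. You differentiate the generating function $F(u)=\exp\big(\sum_{j\ge1}(u^j/j)P(js)\big)$ to get $F'(u)=F(u)\sum_{j\ge1}u^{j-1}P(js)$ and compare coefficients of $u^{k-1}$ — a Newton's-identity-style argument. The paper instead counts directly: writing $n=\prod_p p^{e_p}$ with $\Omega(n)=k$, the term $n^{-s}$ occurs $\#\{p:e_p\ge j\}$ times in $P(js)P_{k-j}(s)$, hence $\sum_{j=1}^k\#\{p:e_p\ge j\}=\sum_p e_p=k$ times in $\sum_{j=1}^k P(js)P_{k-j}(s)$, giving $kP_k(s)$ on the right. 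Both arguments are correct and of comparable length. Your differentiation route is more systematic and transfers mechanically to the squarefree variant and indeed to any similarly-shaped Euler product; the paper's counting argument is more elementary, avoids any appeal to formal power series manipulations, and lays bare the combinatorics of the recursion.
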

\begin{proof}
For \eqref{eq:Pkpart}, we have the following identity for $\zeta(s)$,
\begin{align*}
\sum_{k\ge0}P_k(s) = \zeta(s) = \prod_p(1-p^{-s})^{-1} = \exp\Big(\sum_{j\ge1}\frac{P(js)}{j}\Big).
\end{align*}
Similarly, we have a formal power series identity in $z$,
\begin{align}
\sum_{k\ge0}P_k(s)z^k &= \prod_p\big(1 - zp^{-s}\big)^{-1} = \exp\Big(\sum_{j\ge1}\frac{P(js)}{j}z^j\Big) = \prod_{j\ge1}\exp\Big(\frac{P(js)}{j}z^j\Big) \nonumber\\
& =\prod_{j\ge1}\sum_{n_j\ge0}\frac{1}{n_j!}\Big(\frac{P(js)}{j}z^j\Big)^{n_j} 
= \sum_{k\ge0}z^k \ \sum_{n_1 + 2n_2 +\cdots = k} \prod_{j\ge1} \frac{1}{n_j!}\big(P(js)/j\big)^{n_j}.
\end{align}
Now \eqref{eq:Pkpart} follows by comparing the coefficients of $z^k$.

To show \eqref{eq:Pkrecur}, each term $n^{-s}$ appearing in $\sum_{j=1}^k P(js)P_{k-j}(s)$ has $\Omega(n)=k$. Conversely, for each $\Omega(n)=k$ write $n=\prod_p p^{e_p}$ with $\sum_p e_p=k$. Then for $1\le j\le k$, the term $n^{-s}$ appears $\#\{p:e_p\ge j\}$ times in $P(js)P_{k-j}(s)$, thus appearing
\begin{align*}
\sum_{j=1}^k\#\{p:e_p\ge j\} = \sum_p e_p = k
\end{align*}
times in $\sum_{j=1}^k P(js)P_{k-j}(s)$. Hence \eqref{eq:Pkrecur} follows.

We proceed similarly in the squarefree case. Letting $P_k^\mu(s) = \sum_{\Omega(n)=k}\mu(n)/n^s = (-1)^kP_k^*(s)$,
\begin{align*}
\sum_{k\ge0} P_k^\mu(s)z^k = \prod_p(1-zp^{-s}) & = \exp\Big(-\sum_{j\ge1} \frac{P(js)}{j} z^j\Big)
= \sum_{k\ge0}z^k \ \sum_{n_1 + 2n_2 +\cdots = k} \prod_{j\ge1} \frac{\big(-P(js)/j\big)^{n_j}}{n_j!}
\end{align*}
by expanding $\exp$ as before. Now the the partition formula in \eqref{eq:Pksq} follows by comparing the coefficients of $z^k$, and using $P_k^\mu(s) = (-1)^kP_k^*(s)$. Finally, as before, by counting the number of appearances of $\mu(n)n^{-s}$ on each side (even when $\mu(n)=0$), we get
\begin{align*}
P_k^\mu(s) \ = \ \frac{1}{k}\sum_{j=1}^k -P(js)\cdot P^\mu_{k-j}(s).
\end{align*}
Using $P_k^\mu(s) = (-1)^kP_k^*(s)$ this reduces to \eqref{eq:Pksq} as claimed.
\end{proof}

For instance, the first few $P_k$ are given by
\begin{align*}
2!\cdot P_2(s) & = P(s)^2 + P(2s)\\
3!\cdot P_3(s) & = P(s)^3 + 3P(2s)P(s) + 2P(3s)\\
4!\cdot P_4(s) & = P(s)^4 + 6P(2s)P(s)^2 + 3P(2s)^2 + 8P(3s)P(s) + 6P(4s)\\
5!\cdot P_5(s) & = P(s)^5+10 P(s)^3 P(2s)+15 P(s) P(2s)^2+20 P(s)^2 P(3s)\\
& \quad +20 P(2s) P(3s)+30 P(s) P(4s)+24 P(5s). 
\end{align*}

With the above expressions for $P_k, P_k^*$ in terms of $P$, and using the built-in function PrimeZetaP, a few lines of code in Mathematica computes $f(\N_k),f(\N^*_k)$ to high precision, generating the data in Figure \ref{fig:fNk}.
\footnote{
{\tiny\tt P[k\_Integer,s\_]:= If[k==1,PrimeZetaP[s], Expand[(Sum[P[1,j*s]*P[k-j,s],{j,1,k-1}]+P[1,k*s])/k]]

Do[Print[k," ",NIntegrate[P[k,s],{s,1,Infinity}, WorkingPrecision->30, AccuracyGoal -> 13,PrecisionGoal -> 13]], {k, 10}]}}

The computation was also independently verified to 20 digits on Pari/GP, courtesy of Paul Kinlaw. Pari/GP does not have $P(s)$ built-in, so it was computed using a variant of the identity
$P(s)=\sum_{m\ge1} \frac{\mu(m)}{m}\log\zeta(ms)$, namely,
\begin{align}
P(s) \ = \ \sum_{p\le A}p^{-s} \ + \ \sum_{m\ge1} \frac{\mu(m)}{m}\log\zeta_A(ms)
\end{align}
for a suitable choice of $A$, where $\zeta_A(s) = \zeta(s)\prod_{p\le A}(1-p^{-s})$.

The data in Figure \ref{fig:fNk} suggest that $f(\N_k)$ tends to 1 and $f(\N^*_k)$ tends to $6/\pi^2\approx .607\cdots$ as $k$ grows. With a bit more patience, we may calculate these differences for $k\le 20$.

\begin{figure}[H]
  \caption{Further computations of $1-f(\N_k)$ and $f(\N^*_k)-6/\pi^2$} \label{fig:fNk2}
\begin{align*}
\begin{tabular}{c|l}
$k$ & $1-f(\N_k)$ \\
\hline 
10& $2.85\cdots\times 10^{-3}$\\
11& $1.80\cdots\times 10^{-3}$\\
12& $1.11\cdots\times 10^{-3}$\\
13& $6.74\cdots\times 10^{-4}$\\
14& $4.02\cdots\times 10^{-4}$\\
15& $2.37\cdots\times 10^{-4}$\\
16& $1.38\cdots\times 10^{-4}$\\
17& $7.96\cdots\times 10^{-5}$\\
18& $4.55\cdots\times 10^{-5}$\\
19& $2.58\cdots\times 10^{-5}$\\
20& $1.45\cdots\times 10^{-5}$
\end{tabular}
\begin{tabular}{c|l}
$k$ & $f(\N^*_k)-6/\pi^2$ \\
\hline
10& $5.62\cdots\times 10^{-4}$\\
11& $2.79\cdots\times 10^{-4}$\\
12& $1.39\cdots\times 10^{-4}$\\
13& $6.95\cdots\times 10^{-5}$\\
14& $3.46\cdots\times 10^{-5}$\\
15& $1.73\cdots\times 10^{-5}$\\
16& $8.65\cdots\times 10^{-6}$\\
17& $4.32\cdots\times 10^{-6}$\\
18& $2.16\cdots\times 10^{-6}$\\
19& $1.08\cdots\times 10^{-6}$\\
20& $5.40\cdots\times 10^{-7}$
\end{tabular}
\end{align*}
\end{figure}


\section{Asymptotic behavior for large $k$} \label{sec:largek}

We confirm the limits that the data suggest with the following theorem.
\begin{theorem} \label{thm:fNksim1}
For any $\epsilon>0$,
\begin{align}
f(\N_k) \ = \ 1 \ + \ O\big(k^{-1/2+\epsilon}\big) \quad \text{and}\quad f(\N^*_k) \ = \ \frac{6}{\pi^2} \ + \ O\big(k^{-1/2+\epsilon}\big).
\end{align}
\end{theorem}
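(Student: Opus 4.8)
The plan is to start from the identity $f(\N_k) = \int_1^\infty P_k(s)\,ds$ and split the integral at a point $s=1+\delta$ with $\delta$ a small parameter (eventually $\delta \asymp k^{-1/2+\epsilon}$, or more precisely chosen so that the two error contributions balance). The heuristic is that $P_k(s)$ is close to $1$ for $s$ very near $1$ — because $P_k(s) = \sum_{\Omega(n)=k} n^{-s}$ and the ``mass'' of $k$-almost primes near $x$ is concentrated around $x = e^{e^k}$, which forces $s$ to be within roughly $e^{-k}$ of $1$ before the sum starts to decay — while for $s$ bounded away from $1$ the tail $\int_{1+\delta}^\infty P_k(s)\,ds$ is exponentially small in $k$. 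For the squarefree case $P_k^*$ the same picture holds, except the relevant local density of squarefree $k$-almost primes carries an extra factor tending to $6/\pi^2$, which is where that constant enters.

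The main work is the near-$1$ range. Here I would substitute $s = 1 + u/\log\log x$-type scaling, or more directly use partial summation: $P_k(s) = s\int_1^\infty N_k(t) t^{-s-1}\,dt$, where $N_k(t) = \#\{n\le t : \Omega(n)=k\}$, and then feed in the Sathe--Selberg estimate quoted in the excerpt. Writing $t = e^{e^v}$ so that $\log\log t = v$, the Sathe--Selberg formula says $N_k(t) \approx G\big((k-1)/v\big)\,\frac{t}{\log t}\frac{v^{k-1}}{(k-1)!}$, valid for $k \le 1.99\,v$, i.e. $v \gtrsim k/2$. The factor $v^{k-1}e^{-v}/(k-1)!$ is a Poisson-type weight in $v$ peaked at $v = k-1$, and as $k\to\infty$ the function $G\big((k-1)/v\big)$ evaluated near the peak tends to $G(1) = 1$ (and the analogue for squarefrees tends to $6/\pi^2$). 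Carrying the $\int N_k(t) t^{-s-1}\,dt$ integral and then the $\int ds$ integral, and interchanging order, one is left with an integral of the Poisson weight against a smooth factor, which evaluates to $1 + o(1)$; tracking the Sathe--Selberg error term $O\big(k/(\log\log t)^2\big) = O(k/v^2) = O(1/k)$ near the peak, together with the width $O(\sqrt k)$ of the peak, produces the claimed $O(k^{-1/2+\epsilon})$. One must also check that the portion of the $v$-integral with $v < k/2$, where Sathe--Selberg is not available, contributes negligibly — this follows from crude upper bounds for $N_k(t)$ (e.g. $N_k(t) \ll \frac{t}{\log t}\frac{(\log\log t + O(1))^{k-1}}{(k-1)!}$ uniformly, or the trivial bound that $\Omega(n)=k$ forces $n \ge 2^k$), since the Poisson weight is exponentially small away from its peak.

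The delicate point, and the one I expect to be the main obstacle, is making the interchange of the $s$-integral and the $t$-integral rigorous and uniform, and then extracting the precise constant and error term from $\int_1^\infty \big(G((k-1)/v) - 1\big) \cdot (\text{Poisson weight in } v)\,dv$ without losing a power of $\log k$ or worse. Near the peak $v \approx k$ one has $(k-1)/v \approx 1$ and needs the Lipschitz behavior of $G$ at $z=1$ together with the fluctuation $|v - (k-1)| \lesssim \sqrt{k\log k}$ that carries essentially all the Poisson mass; combining $G'(1) = O(1)$ with this fluctuation size gives an error $O(\sqrt{k\log k}/k) = O(k^{-1/2+\epsilon})$, which matches the Sathe--Selberg term's contribution. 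Keeping all of this uniform as both $k\to\infty$ and $s\to1^+$ simultaneously — in particular handling the transition region $s - 1 \asymp e^{-k}$ where $N_k(t)t^{-s}$ is neither flat nor decaying — is where the care is needed; everything else is routine estimation. For $f(\N_k^*)$ the argument is identical with $G$ replaced by its squarefree analogue $G^*(z) = \frac{1}{\Gamma(1+z)}\prod_p(1 + z/p)(1-1/p)^z$ (or whatever normalization makes $G^*(1) = \prod_p(1-p^{-2}) = 6/\pi^2$), and one reads off the constant $6/\pi^2$ in place of $1$.
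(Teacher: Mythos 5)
Your proposal is essentially the paper's proof: partial summation brings in $N_k(t)$, the Sathe--Selberg theorem with the substitution $v=\log\log t$ produces the Poisson weight $v^k e^{-v}/k!$ peaked at $v\approx k$, the values $G(1)=1$ and $G^*(1)=6/\pi^2$ yield the constants, and a crude uniform bound (the paper uses Erd\H os--S\'ark\"ozy, $N_{k+1}(t)\ll k^4 2^{-k} t\log t$) disposes of the range $\log\log t < k/1.99$ where Sathe--Selberg is unavailable. The interchange you flag as the main obstacle is in fact immediate by Tonelli (nonnegative integrand) and simply reproduces the partial-summation formula $f(\N_{k+1})=\int N_{k+1}(t)\,\frac{1+\log t}{t^2(\log t)^2}\,dt$, so the paper skips the detour through $\int_1^\infty P_k(s)\,ds$ and applies partial summation directly to $\sum 1/(n\log n)$.
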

\begin{proof}
Fix $k$ large and let $N_k(x) = \#\{n\le x:\Omega(n)=k\}$. First, by partial summation
\begin{align*}
f(\N_{k+1}) = \sum_{\Omega(n)=k+1}f(n) = -\int_{2^{k+1}}^\infty N_{k+1}(t)f'(t)\;dt = \int_{2^{k+1}}^\infty \frac{N_{k+1}(t)}{t^2\log t}\Big(1 + \frac{1}{\log t}\Big)\;dt =: I.
\end{align*}
The Sathe--Selberg theorem \cite{SathSelb} implies that for $r=1.99$ and $k\le r\log\log x$ (i.e., $x\ge e^{e^{k/r}}$)
\begin{align}
N_{k+1}(x) \ = \ G\Big(\frac{k}{\log\log x}\Big)\frac{x}{\log x}\frac{(\log\log x)^k}{k!}\bigg(1 \ + \ O\Big(\frac{k}{(\log\log x)^2}\Big)\bigg),
\end{align}
where $G(z) = \frac{1}{\Gamma(1+z)}\prod_p(1-z/p)^{-1}(1-1/p)^z$.
As such, we split up the integral $I = I_1 + I_2$ at $t = e^{e^{k/r}}$. For $I_1$, we use the universal bound of Erd\H os-S\'ark\"ozy \cite{ESark},
\begin{align}
N_{k+1}(t) \ \ll \  \frac{k^4}{2^k} \, t\log t \qquad \textrm{for all }t,k\ge1.
\end{align}
Using $r=1.99$, we have $1/r-\log2 <-1/6$ so that
\begin{align}
I_1 := \int_{2^{k+1}}^{e^{e^{k/r}}} \frac{N_{k+1}(t)}{t^2\log t}\;dt \ \ll \ \frac{k^4}{2^k}\int_{2^{k+1}}^{e^{e^{k/r}}} \frac{dt}{t} \ < \ \frac{k^4}{2^k}e^{k/r}
\ \ll \ e^{-k/6}.
\end{align}

For the bulk of the integral, $I_2$, we apply Sathe--Selberg to get
\begin{align*}
I_2 := \int_{e^{e^{k/r}}}^\infty \frac{N_{k+1}(t)}{t^2\log t}\;dt \ & = \ \frac{1}{k!}\int_{e^{e^{k/r}}}^\infty G\Big(\frac{k}{\log\log x}\Big)\frac{(\log\log t)^k}{t(\log t)^2}\bigg(1 \ + \ O\Big(\frac{k}{(\log\log t)^2}\Big)\bigg)\;dt \\
& = \frac{1}{k!}\int_{k/r}^\infty G(k/y)y^ke^{-y}\big(1+O(k/y^2)\big)\;dy.
\end{align*}
Since $G(z)\ll 1$ for $z\le r$, the error in $I_2$ is bounded by
\begin{align*}
\frac{k}{k!}\int_{k/r}^\infty G(k/y)y^{k-2}e^{-y}\;dy \ll \frac{\Gamma(k-1)}{(k-1)!} = \frac{1}{k-1}
\end{align*}
so that
\begin{align}
I_2 \ = \ \frac{1}{k!}\int_{k/r}^\infty G(k/y)y^ke^{-y}\;dy \ + \ O(1/k).
\end{align}
The bulk of $I_2$ lies in the range $|y-k|<k^\delta$, for $\delta=1/2+\epsilon$, so we split accordingly $I_2 = J + J'$. Note $y^ke^{-k}$ is increasing for $y$ up to $k$, and decreasing thereafter. So letting $z=y- k$,
\begin{align*}
\frac{y^ke^{-y}}{k!} & < \exp(k\log y - k\log k - z) = \exp\big(k\log(1+z/k) - z\big) < e^{-z^2/2k}
\end{align*}
using the lower bound $k!> k^k e^{-k}$.
Thus
\begin{align}
\frac{y^ke^{-y}}{k!} < e^{-\tfrac{1}{2}k^{2\delta-1}} \qquad \text{for}\quad |y- k| > k^\delta.
\end{align}
and so, using $G(z)\ll 1$ again, the integral $J'$ is bounded by
\begin{align}
J' & := \frac{1}{k!}\bigg(\int_{k/r}^{k-k^\delta}+\int_{k+k^\delta}^\infty\bigg) G(k/y)y^ke^{-y}\;dy \ \ll \ e^{-k^{\epsilon}}.
\end{align}
For $|y- k| < k^\delta$, we have $G(k/y) = 1 \ + \ O\big(k^{\delta-1}\big)$  \cite[p. 236]{MVtext} so that
\begin{align}
J := \frac{1}{k!}\int_{k-k^\delta}^{k+k^\delta} G(k/y)y^ke^{-y}\;dy & = \frac{1+O(k^{\delta-1})}{k!}\int_{k-k^\delta}^{k+k^\delta} y^ke^{-y}\;dy \nonumber\\
& = \ \frac{1+O(k^{\delta-1})}{k!}\Gamma(k+1) \ = \ 1+O(k^{\delta-1}).
\end{align}
Combining altogether, we obtain $f(\N_k) = I_1+J+J' = 1 + O(k^{-1/2+\epsilon})$.

Similarly, the squarefree version of the Sathe--Selberg theorem \cite[p. 237, ex. 4]{MVtext} states
\begin{align}
N^*_{k+1}(x) \ = \ G^*\Big(\frac{k}{\log\log x}\Big)\frac{x}{\log x}\frac{(\log\log x)^k}{k!}\bigg(1 \ + \ O\Big(\frac{k}{(\log\log x)^2}\Big)\bigg)
\end{align}
where $G^*(z) = \frac{1}{\Gamma(1+z)}\prod_p(1+z/p)(1-1/p)^z$. And since $ G^*(1)=\prod_p(1-p^{-2})=6/\pi^2$, by an analogous argument we obtain $f(\N^*_k) = 6/\pi^2  + O\big(k^{-1/2+\epsilon}\big)$.
\end{proof}

Further, by similar arguments one may show for any choice of integers $e_p\ge0$ for each prime $p$,
\begin{align}
\sum_{\substack{\Omega(n)\,=\,k\\p^{e_p} \,\nmid\, n \;\forall e_p>0}}\frac{1}{n\log n} \ \sim \ \prod_{e_p>0} \big(1 - p^{-e_p}\big).
\end{align}

In particular, we deduce that the evens and odds asymptotically contribute equally $1/2$ to $f(\N_k)\sim 1$. Whereas in the squarefree case, the evens and odds contribute $2/\pi^2,4/\pi^2$, respectively, to $f(\N^*_k)\sim 6/\pi^2$.

Generalizing in another direction, we may consider the contribution to $f(\N_k)$ from an arbitrary arithmetic progression.
\begin{corollary}\label{cor:fNkq}
For any fixed integers $0\le a<q$, as $k\to\infty$
\begin{align}\label{eq:fNkq}
\sum_{\substack{\Omega(n)\,=\,k\\n\,\equiv\, a\;(q)}}\frac{1}{n\log n} \ = \ \frac{1+o_q(1)}{q}.
\end{align}
\end{corollary}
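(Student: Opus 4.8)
The plan is to mirror the proof of Theorem \ref{thm:fNksim1} but with a character-sum decomposition to isolate the progression $n\equiv a\ (q)$. First I would reduce to the case $\gcd(a,q)=1$ by a standard splitting argument: write $d=\gcd(a,q)$, and observe that any $n\equiv a\ (q)$ with $\Omega(n)=k$ and $n$ large must be divisible by the $d$-part in a controlled way; more cleanly, I would handle the full sum by orthogonality of Dirichlet characters mod $q$, since \eqref{eq:fNkq} with the trivial character recovers $f(\N_k)\sim1$ and the error terms for non-principal characters will be the whole content. So the main step is to show that for each non-principal $\chi \pmod q$,
\begin{align}\label{eq:charsum}
\sum_{\Omega(n)=k}\frac{\chi(n)}{n\log n} \ = \ o_q(1) \qquad (k\to\infty).
\end{align}

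To prove \eqref{eq:charsum} I would again pass through partial summation and the counting function $N_{k+1}(x,\chi) := \sum_{n\le x,\,\Omega(n)=k+1}\chi(n)$, for which there is a Sathe--Selberg type asymptotic: one has
\begin{align*}
\sum_{\substack{n\le x\\ \Omega(n)=k+1}}\chi(n) \ = \ G_\chi\!\Big(\frac{k}{\log\log x}\Big)\frac{x}{\log x}\frac{(\log\log x)^k}{k!}\Big(1+O\big(\tfrac{k}{(\log\log x)^2}\big)\Big)
\end{align*}
in the range $k\le 1.99\log\log x$, where now $G_\chi(z)=\frac{1}{\Gamma(1+z)}\prod_p(1-z\chi(p)/p)^{-1}(1-1/p)^z$; this follows from the Selberg--Delange method applied to $\sum_n \chi(n) z^{\Omega(n)} n^{-s} = \prod_p(1-z\chi(p)p^{-s})^{-1}$, exactly as in the classical case, using the zero-free region of $L(s,\chi)$ to justify the contour shift. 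The key observation is that $G_\chi(1)=\frac{1}{\Gamma(2)}\prod_p(1-\chi(p)/p)^{-1}(1-1/p) = L(1,\chi)^{-1}\prod_p(1-1/p)^{-1}\cdot\big(\text{convergent factor}\big)$; more precisely $\prod_p(1-\chi(p)/p)^{-1}$ converges to $L(1,\chi)$, which is finite and nonzero, but crucially the whole evaluation at $z=1$ is dwarfed after integration. Following the $I=I_1+I_2$ split of Theorem \ref{thm:fNksim1}: the tail $I_1$ over $t\le e^{e^{k/r}}$ is handled by the trivial bound $|N_{k+1}(t,\chi)|\le N_{k+1}(t) \ll (k^4/2^k)t\log t$, giving $\ll e^{-k/6}$ as before; for $I_2$ the integral localizes to $|y-k|<k^{1/2+\epsilon}$ where $G_\chi(k/y)=G_\chi(1)+O(k^{\epsilon-1/2})$, yielding $\sum_{\Omega(n)=k}\chi(n)/(n\log n) = G_\chi(1) + O(k^{\epsilon-1/2})$.

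At first glance this only gives convergence to the constant $G_\chi(1)$, not to $0$, so the final step is to identify that constant correctly. Here I would be more careful about the shift in the exponent of $\log x$: the Selberg--Delange analysis shows that $\sum_{\Omega(n)=k+1}\chi(n)$ is governed by the singularity of $L(s,\chi)^{-z}\cdot(\text{holomorphic})$ at $s=1$, and since $L(s,\chi)$ is \emph{holomorphic and nonvanishing} at $s=1$ for $\chi$ non-principal (unlike $\zeta(s)$), the main term actually has an extra saving — the counting function is $\ll_\chi x(\log\log x)^k/((\log x)^2 k!)$ rather than $x(\log\log x)^k/((\log x)\,k!)$, i.e. smaller by a factor $\log x$. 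Propagating this through the same integral then gives $\sum_{\Omega(n)=k}\chi(n)/(n\log n) \ll_\chi 1/k \to 0$, which is \eqref{eq:charsum}. Summing over all $\chi\pmod q$ via $\frac1{\varphi(q)}\sum_\chi \bar\chi(a)\sum_n\chi(n)/(n\log n)$, the principal character contributes $\frac{1}{\varphi(q)}\big(f(\N_k(\gcd=1\text{ with }q)) \big) = \frac{1+o_q(1)}{\varphi(q)}\cdot\frac{\varphi(q)}{q}$ — more carefully, the principal-character term is $\sum_{\Omega(n)=k,\,\gcd(n,q)=1} 1/(n\log n)$, which by the argument at the end of Section \ref{sec:largek} (removing the finitely many primes $p\mid q$) tends to $\prod_{p\mid q}(1-1/p) = \varphi(q)/q$; dividing by $\varphi(q)$ gives $\frac{1}{q}$. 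The leftover case $\gcd(a,q)=d>1$ is then recovered by writing $n=d\,m$ appropriately — but since $d\mid n$ forces $\Omega(m)=k-\Omega(d)$ and $m$ ranges over a shifted progression mod $q/d$ coprime to part of $q$, a short bookkeeping argument reduces it to the coprime case and still yields $\frac{1+o_q(1)}{q}$, completing the proof. The main obstacle I expect is setting up the Selberg--Delange asymptotic for $N_{k+1}(x,\chi)$ with the correct power of $\log x$ and a uniform error term valid up to $k\approx 1.99\log\log x$; everything after that is a routine adaptation of the integral estimates already carried out for Theorem \ref{thm:fNksim1}.
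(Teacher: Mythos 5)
Your route is genuinely different from the paper's. The paper does not use orthogonality of characters at all: it cites Spiro's direct extension of Selberg's formula to progressions, which gives
\[
N_{k+1}(x;q,a) \ = \ \frac{1}{\phi(q)}\,G_q\!\Big(\frac{k}{\log\log x}\Big)\frac{x}{\log x}\frac{(\log\log x)^k}{k!}\Big(1+O_q\big(\tfrac{k}{(\log\log x)^2}\big)\Big)
\]
for $(a,q)=1$, with $G_q(z)=G(z)\prod_{p\mid q}(1-z/p)$, so $G_q(1)=\phi(q)/q$ and the proof of Theorem~\ref{thm:fNksim1} carries over verbatim to give $1/q$. The non-coprime case is then a one-line change of variables $n=md$ with $m\equiv a/d\pmod{q/d}$. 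That is cleaner than what you propose, because it sidesteps the entire issue of controlling non-principal character sums.

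Within your own plan there is a concrete gap. You first assert a Sathe--Selberg asymptotic for $N_{k+1}(x,\chi)$ with the \emph{same} main term $G_\chi(k/\log\log x)\frac{x}{\log x}\frac{(\log\log x)^k}{k!}$ as in the principal case, and only afterwards note that this must be wrong because $L(s,\chi)$ has no pole at $s=1$ --- at which point you replace it by the guess $N_{k+1}(x,\chi)\ll_\chi x(\log\log x)^k/((\log x)^2 k!)$. Neither form is justified, and the second one is not what Selberg--Delange actually gives here. Since $\prod_p(1-z\chi(p)p^{-s})^{-1}=L(s,\chi)^z H_\chi(s,z)$ is \emph{holomorphic and nonvanishing} in a neighborhood of $s=1$ for $\chi$ non-principal, the generating Dirichlet series has no singularity at $s=1$, and the resulting bound is of the shape $\sum_{n\le x}\chi(n)z^{\Omega(n)}\ll_{R,q} x\exp(-c\sqrt{\log x})$ uniformly for $|z|\le R$, from which a Cauchy estimate gives $N_{k+1}(x,\chi)\ll R^{-k}x\exp(-c\sqrt{\log x})$. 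To push this through the partial-summation integral in the critical range $\log\log x\asymp k$ you would still need to choose $R$ and balance the two factors; the ``extra power of $\log x$'' heuristic you wrote is not a theorem you can quote. This is repairable, but as written it is the missing ingredient: you are effectively re-proving a character-twisted Selberg--Delange estimate that the paper gets for free from Spiro (or, as the paper's footnote notes, from Delange).

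One more small point: your description of the non-coprime case as ``a short bookkeeping argument'' is correct in spirit but should be made explicit --- the paper sets $i=\Omega(d)$, applies the coprime case modulo $q/d$ to get $\sim (q/d)^{-1}$, and then substitutes $n=md$ using $\log(md)\sim\log m$, which gives exactly $1/q$.
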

\begin{proof}
Consider the counting function $N_k(x;q,a) := \#\{n\le x: \Omega(n)=k,n\equiv a\;(q)\}$. First, if $(a,q)=1$, then Theorem 2 in Spiro \cite{Spiro} gives\footnote{Equation \eqref{eq:Spiro} may also be derived from earlier work of Delange \cite{Del}.}
\begin{align}\label{eq:Spiro}
N_{k+1}(x;q,a) \ = \ \frac{1}{\phi(q)} G_q\Big(\frac{k}{\log\log x}\Big)\frac{x}{\log x}\frac{(\log\log x)^k}{k!}\bigg(1 \ + \ O_q\Big(\frac{k}{(\log\log x)^2}\Big)\bigg)
\end{align}
for $k\le 1.99\log\log x$, where 
\begin{align*}
G_q(z) := \frac{(\phi(q)/q)^z}{\Gamma(1+z)}\prod_{p\nmid q} (1-z/p)^{-1}(1-1/p)^z = G(z)\prod_{p\mid q}(1-z/p).
\end{align*}
And since $G_q(1) = \phi(q)/q$, by a similar argument to Theorem \ref{thm:fNksim1} we obtain \eqref{eq:fNkq} (with error term $O_q(k^{-1/2+\epsilon})$).

When $d=(a,q)>1$, letting $i=\Omega(d)$, by the above argument we have
\begin{align*}
\sum_{\substack{\Omega(m)=k-i\\m\,\equiv\, a/d\;(q/d)}}\frac{1}{m\log m} \sim \frac{1}{q/d}.
\end{align*}
Thus \eqref{eq:fNkq} follows by setting $n=md$ and noting $\log md \sim \log m$.
\end{proof}
The fact that every progression $a\pmod{q}$ contributes $1/q$ to $f(\N_k)\sim1$ is especially remarkable in view of \eqref{eq:Spiro}, since $N_k(x;q,a)\sim N_k(x)/q$ is not always true. However, it does indeed hold in the critical range of $\log\log x\sim k$.

One may also obtain an analogous result in the squarefree case. Namely, under the same conditions as Corollary \ref{cor:fNkq}, if $d=(a,q)$ is squarefree then
\begin{align}
\sum_{\substack{\Omega(n)=k\\n\,\equiv\, a\;(q)}}\frac{\mu(n)^2}{n\log n} \ \sim \ \frac{1}{q}\prod_{p\nmid d}(1-p^{-2})\prod_{p\mid d}(1-p^{-1}) = \frac{6/\pi^2}{q}\frac{d}{\sigma(d)}.
\end{align}

In the next section, we turn to the question of optimal error bounds in Theorem \ref{thm:fNksim1}.

\section{Further progress via zeta functions for large $k$}

As with the results of Bayless et al. \cite{BKK}, Theorem \ref{thm:fNksim1} is proven by using partial summation and knowledge of the counting function for $\N_k$. And given the success of the zeta function approach for small $k$, one might hope that the approach would yield results that beat the stated bound of $O(k^{-1/2+\epsilon})$. Inspecting in Figure \ref{fig:fNk2}, the ratio of consecutive entries of $1-f(\N_k)$ appears to converge to $1/2$, suggesting that the true error term in Theorem \ref{thm:fNksim1} may be $O(2^{-k})$. Such a bound remains out of reach for the moment, but we obtain related partial progress in this direction, culminating with the {\bf exponentially-decaying} upper bound $f(\N_k)\le 1 + O(k\,2^{-k/2})$.

\begin{theorem}\label{thm:intlogzetak}
We have
\begin{align}
\frac{1}{k!}\int_1^\infty [\log\zeta(s)]^k\;ds \ = \ 1 \ + \ O(2^{-k}).
\end{align}
\end{theorem}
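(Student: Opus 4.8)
The plan is to expand $[\log\zeta(s)]^k$ via the identity $\log\zeta(s) = \sum_{j\ge 1} P(js)/j$ and compare term-by-term with the partition formula \eqref{eq:Pkpart} for $P_k(s)$. Writing $\log\zeta(s) = P(s) + R(s)$ with $R(s) := \sum_{j\ge 2} P(js)/j$, the multinomial theorem gives $[\log\zeta(s)]^k = \sum_{i=0}^{k}\binom{k}{i} P(s)^{k-i} R(s)^i$. The $i=0$ term contributes $\frac{1}{k!}\int_1^\infty P(s)^k\,ds$, which is precisely the leading partition (all mass on $n_1=k$) in $k!\,f(\N_k)$; one expects this to already be $1 + O(2^{-k})$ since, heuristically, the dominant contribution to $f(\N_k)$ comes from $n$ that are products of $k$ distinct primes, and indeed $\tfrac1{k!}\int_1^\infty P(s)^k ds \to 1$. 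So the main work is to bound the tail $\frac{1}{k!}\sum_{i\ge 1}\binom{k}{i}\int_1^\infty P(s)^{k-i}R(s)^i\,ds$ by $O(2^{-k})$, and separately to control $\frac{1}{k!}\int_1^\infty P(s)^k\,ds - 1$.

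For the tail, I would use that $R(s) \le \tfrac12 P(2s) + \sum_{j\ge 3} P(js)/j \ll P(2s)$ for $s\ge 1$, since $P(js)\le P(3s)\le P(2s)^{3/2}\cdot(\text{small})$ decays geometrically in $j$; more crudely $R(s)\le C\,P(2s)$ for an absolute constant $C$, uniformly on $s\ge 1$. Moreover $P(2s)\le P(s+1)$ and, crucially, $P(2) = \sum_p p^{-2} = 0.4522\cdots < 1/2$, and $P(2s)$ is decreasing. So on the range $s\ge 1$ where the integrand lives, each factor of $R$ relative to a factor of $P$ costs roughly $P(2s)/P(s)$, but the cleanest route is: bound $\int_1^\infty P(s)^{k-i}R(s)^i\,ds \ll C^i\int_1^\infty P(s)^{k-i}P(2s)^i\,ds \le C^i P(2)^i \int_1^\infty P(s)^{k-i}P(2s)^{i-1}P(2s)\,ds$, and iterate — or, most simply, dominate $P(2s)^i \le P(2)^{i-1}P(2s)$ and note $\int_1^\infty P(s)^{k-i}P(2s)\,ds \le \int_1^\infty P(s)^{k-i+1}\,ds$ since $P(2s)\le P(s)$... this needs care because we want to land on $\int P^k$. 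The efficient bookkeeping: $\binom{k}{i}\int_1^\infty P(s)^{k-i}R(s)^i ds \le \binom{k}{i}(2C)^i P(2)^{i}\int_1^\infty P(s)^k\,ds/P(1)^i$-type estimates won't quite work since $P(1)$ diverges; instead restrict first to $s\ge s_0$ for fixed $s_0$ slightly above $1$ (handling $1\le s\le s_0$ by the convergence of $f(\N_k)$ and crude bounds showing that sub-integral is exponentially small in $k$ because $P(s)<1$ there... but $P(1^+)=+\infty$, so actually the mass near $s=1$ is where $P(s)$ is large). The honest approach is to keep the integral intact: since $R(s)\ll P(2s)$ and $P(2s)/P(s)\to 0$ as $s\to1^+$ is false — rather $P(2s)/P(s)\to 0$ as $s\to\infty$ and stays $<1$ throughout — one shows $\int_1^\infty P(s)^{k-i}R(s)^i ds \ll (C')^i\,2^{-k}$ for each $i\ge1$ by the substitution tying $P(2s)$ integrals to halved arguments, then sum the geometric-type series $\sum_i \binom{k}{i}(C')^i 2^{-k}$ — wait, that diverges.

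So the real structure must be: $\frac{1}{k!}\binom{k}{i}\int_1^\infty P(s)^{k-i}R(s)^i\,ds = \frac{1}{i!(k-i)!}\int_1^\infty P(s)^{k-i}R(s)^i\,ds$, and one bounds $R(s)^i \le (\text{const})^i \cdot (\text{sum over compositions})$ matching the non-leading partitions in \eqref{eq:Pkpart}, so that the whole tail is exactly $f(\N_k) - \frac{1}{k!}\int P(s)^k ds$ plus lower-order pieces; since we will independently have shown (or can cite, via the exponential bound $f(\N_k)\le 1+O(k2^{-k/2})$ from Theorem~5.2) that $f(\N_k)=1+O(k2^{-k/2})$, and the claim is the sharper $O(2^{-k})$ for the pure $[\log\zeta]^k$ integral, the point is that replacing $P_k(s)$ by $\frac{1}{k!}[\log\zeta(s)]^k$ \emph{removes} the divisor-weight discrepancies (the $\frac{1}{n_j!}(1/j)^{n_j}$ factors versus multinomial coefficients) and the difference between the two is what one estimates. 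Concretely: $\frac{1}{k!}[\log\zeta(s)]^k - P_k(s) = \sum_{\text{partitions}}\big(\tfrac1{k!}\binom{k}{n_1,2n_2,\dots} \prod_j \tfrac1{j^{n_j}}\tfrac{1}{?} - \prod_j \tfrac1{n_j!}(1/j)^{n_j}\big)\prod_j P(js)^{n_j}$, and every term with some $n_j>0$ for $j\ge2$ carries at least one factor $P(2s)$, hence integrates to $O(2^{-k})$ after reindexing, while the all-$n_1$ term is handled by: $\frac{1}{k!}P(s)^k$ versus $\frac{1}{k!}P(s)^k$ — identical. Hence $\frac{1}{k!}\int_1^\infty[\log\zeta(s)]^k\,ds - f(\N_k)$ is the sum over nontrivial partitions of combinatorial-coefficient differences times $\int \prod P(js)^{n_j}ds$, each $O(2^{-k})$, and summing over the $p(k)$ partitions (subexponential in $k$) with the geometric decay from each $j\ge2$ factor keeps the total $O(2^{-k})$; then $f(\N_k) = 1 + O(2^{-k})$ would follow — but that's exactly what we're claiming, so this route is circular unless we prove $f(\N_k)=1+O(2^{-k})$ here too.

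\textbf{The main obstacle}, therefore, is pinning down the size of $\frac{1}{k!}\int_1^\infty P(s)^k\,ds$ itself: one must show this equals $1 + O(2^{-k})$ directly. I would attack this by the substitution $s = 1 + u/\log\text{(something)}$ — more precisely, since $P(s) \sim \log\frac{1}{s-1}$ as $s\to1^+$... no: $P(s) = \log\zeta(s) + O(1) = \log\frac{1}{s-1} + O(1)$ near $s=1$. Then $P(s)^k$ is sharply concentrated near $s=1$; with $s-1 = e^{-t}$, $ds = -e^{-t}dt$, $P(s) \approx t$, giving $\frac{1}{k!}\int P^k ds \approx \frac{1}{k!}\int_0^\infty t^k e^{-t}\,dt = 1$, and the corrections $P(s) = t + c(s)$ with $c(s)$ bounded and analytic produce the error. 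Quantifying that the error is $O(2^{-k})$ rather than merely $o(1)$ requires controlling $\int_0^\infty (t+c)^k e^{-t}dt - k!$ where $c = c(1+e^{-t})$ ranges in a bounded interval; writing $(t+c)^k = \sum \binom{k}{i}t^{k-i}c^i$, the $i\ge 1$ terms give $\sum_{i\ge1}\binom{k}{i}\int c^i t^{k-i}e^{-t}dt$, and since $\int t^{k-i}e^{-t}dt = (k-i)!$ and $|c|\le M$, this is $\le \sum_{i\ge1}\binom{k}{i}M^i(k-i)!/k! = \sum_{i\ge1}M^i/i! = e^M - 1 = O(1)$, which is far too weak. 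The resolution: $c(1+e^{-t})$ is not a constant — it \emph{decays}, because $P(s) - \log\frac{1}{s-1} \to P(1)-\gamma$... it tends to a constant, not zero. Hmm — so one genuinely needs that the relevant "constant" piece, after the substitution making the leading term exactly $t$, contributes with a factor that is $O(2^{-k})$, which forces using the \emph{exact} relation $P(s) = \sum_{m\ge1}\frac{\mu(m)}{m}\log\zeta(ms)$ and the rapid ($O(2^{-ks})$-type) decay of $\log\zeta(ms)$ for $m\ge 2$. I expect the clean proof writes $[\log\zeta(s)]^k = \big(P(s) + \tfrac12 P(2s) + \cdots\big)^k$, isolates $P(s)^k$, bounds the rest by $k\,P(2s)\,[\log\zeta(s)]^{k-1}$-style first-order Taylor with remainder, integrates using $\int_1^\infty P(2s)[\log\zeta(s)]^{k-1}ds \ll 2^{-k}(k-1)!$ (proven by $s\mapsto 2s$-type scaling against $\int[\log\zeta]^{k-1}$), and then separately shows $\frac{1}{k!}\int_1^\infty P(s)^k ds = 1 + O(2^{-k})$ by yet another application of the same scaling trick to the difference $\int P(s)^k ds - \int[\log\zeta(s)]^k ds$ combined with $\int[\log\zeta(s)]^k ds = \int (s-1)^{-k}(1+O(s-1))^k\,ds \cdot(\dots)$ handled via the explicit $\zeta(s)=\frac{1}{s-1}+\gamma+O(s-1)$ expansion and Watson's-lemma-style asymptotics with geometric error control. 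Getting the error in that last asymptotic down to $O(2^{-k})$ — as opposed to $O(1/k)$ — is the crux, and I expect it hinges on the fact that $\zeta(s)(s-1) - 1$ vanishes to first order at $s=1$ with the next singularity-type behavior suppressed by a full power of $2$ in this normalization; this is where I would spend the most care.
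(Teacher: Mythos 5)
Your proposal never actually closes; it is a sequence of attempted routes, several of which you yourself note fail, and the final paragraph explicitly defers the crux (``this is where I would spend the most care''). That is an honest record of thinking, but it is not a proof, and one of the early anchor points is simply false: you assert that $\tfrac{1}{k!}\int_1^\infty P(s)^k\,ds \to 1$. It does not. By Theorem~\ref{thm:Pkint} this integral tends to $\alpha = e^{-c} = 0.729\cdots$, precisely because $P(s) = \log\tfrac{1}{s-1} - c + O(s-1)$ near $s=1$ with $c = \sum_{m\ge2}P(m)/m > 0$. You later rediscover this nonzero constant (``it tends to a constant, not zero'') when doing the $s-1 = e^{-t}$ substitution, but you never reconcile it with the earlier claim, and the whole decomposition $\log\zeta = P + R$ then forces you to fight an $O(1)$ rather than $O(2^{-k})$ discrepancy in the main term, which is why that route keeps stalling.

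The idea you are missing is to model $\log\zeta(s)$ not by $P(s)$ but by $\log\tfrac{1}{s-1}$, whose power integrates \emph{exactly}: $\int_1^2 \log(\tfrac{1}{s-1})^k\,ds = \int_0^\infty u^k e^{-u}\,du = k!$, with no error at all. Then one needs only a one-sided linear bound $0 < \log\zeta(s) + \log(s-1) < 0.6\,(s-1)$ on $[1,2]$ (the constant $\gamma$ in the Laurent expansion of $\zeta$ controls this). Expanding $[\log\zeta(s)]^k \le \big(0.6(s-1) - \log(s-1)\big)^k$ binomially and using the exact evaluation $\int_0^1 s^j(-\log s)^{k-j}\,ds = (k-j)!/(j+1)^{k-j+1}$, every term with $j\ge 1$ carries a factor $(j+1)^{-(k-j+1)} \le 2^{-(k-1)}$, and the series over $j$ telescopes geometrically, giving the difference on $[1,2]$ is $O(2^{-k})$. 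The tail $[2,\infty)$ is handled separately and trivially: $\log\zeta(s) < 2^{1-s}$ there, so $\int_2^\infty [\log\zeta(s)]^k\,ds < 2^{-k}$. Your closing sentence about ``$\zeta(s)(s-1) - 1$ vanishing to first order'' and Watson-style asymptotics is pointing in this direction, but the correct normalization is to make the model integral identically $k!$ on $[1,2]$ rather than to chase an asymptotic for $\int P(s)^k\,ds$ (which has a different, non-unit limit) — that is what collapses the error to a clean $O(2^{-k})$ rather than $O(1/k)$.
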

\begin{proof}
Fix $k$ sufficiently large. The result will follow from the following three claims,
\begin{align}
\int_1^2 \log(\tfrac{1}{s-1})^k\;ds \ = \ k!, \qquad  0 < \int_2^\infty [\log\zeta(s)]^k\;ds \ < \ 2^{-k},
\end{align}
and
\begin{align}
0 \ < \ J_k \ \ll \ 2^{-k} \qquad\quad \textrm{where}\quad J_k:=\frac{1}{k!}\int_1^2 [\log\zeta(s)]^k - \log(\tfrac{1}{s-1})^k\;ds.
\end{align}

First, we have
\begin{align}
\int_1^2 \log(\tfrac{1}{s-1})^k\;ds = \int_0^1 (-\log s)^k\;ds = \int_0^\infty u^k e^{-u}\;du 
= k!\,.
\end{align}

Second, we note $0<\log\zeta(s)<2^{1-s}$ for $s\ge2$, so
\begin{align}
\int_2^\infty \log\zeta(s)^k\;ds < \int_1^\infty 2^{-ks}\;ds = \frac{2^{-k}}{k\log 2} < 2^{-k}.
\end{align}

Third, the series expansion of $\log\zeta$ at $s=1$ is
\begin{align*}
\log\zeta(s) = \log\big(\tfrac{1}{s-1}\big) + \gamma(s-1) + O\big((s-1)^2\big).
\end{align*}
It will suffice to use $0<\log\zeta(s) +\log(s-1) < .6(s-1)$ for $s\in[1,2]$. 
Expanding the binomial gives
\begin{align*}
[\log\zeta(s)]^k < \Big(.6(s-1)-\log(s-1)\Big)^k = \sum_{j=0}^k \binom{k}{j}[.6(s-1)]^j[-\log(s-1)]^{k-j}
\end{align*}
so that
\begin{align}
J_k := \frac{1}{k!}\int_1^2 [\log\zeta(s)]^k - [-\log(s-1)]^k\;ds < \sum_{j=1}^k \frac{.6^j}{j!(k-j)!}\int_0^1 s^j[-\log s]^{k-j}\;ds.
\end{align}
Note we have
\begin{align}\label{eq:intcombo}
\int_0^1 s^j[-\log s]^i\;ds = \int_0^\infty u^i\,e^{-(j+1)u}\;du 
= \frac{i!}{(j+1)^{i+1}}.
\end{align}
Hence using $i=k-j$, we bound $J_k$ by the hypergeometric series
\begin{align}
J_k < \sum_{j=1}^{k} \frac{.6^j}{j!}(j+1)^{-(k-j+1)} \le .6\cdot 2^{-k}\Big(1 + \sum_{j=2}^{k}.6^{j-1}\Big) \ll 2^{-k},
\end{align}
since the ratio of consecutive terms is $.6\big(\frac{j+1}{j+2}\big)^{k-j}\le.6$ for $j\ge 2$.
\end{proof}

Recall the prime zeta function $P(s) = \sum_p p^{-s}$. Consider
\begin{align*}
h(s) := \log\zeta(s) - P(s) = \sum_{m\ge 2}\sum_p \frac{1}{mp^{ms}} \ = \ h(1) + h'(1)(s-1) + O\big((s-1)^2\big)
\end{align*}
where the Taylor series coefficients are given by
\begin{align*}
c := h(1) & = \sum_{m\ge 2}\sum_p \frac{1}{mp^{m}} =  .315718\cdots \quad\textrm{ and}\\
h'(1) & = -\sum_{m\ge 2}\sum_p \frac{\log p}{p^{ms}}\Big|_{s=1}
= -\sum_p \frac{\log p}{p(p-1)} = -.7505\cdots.
\end{align*}
Hence from the expansion $P(s)+h(s)=\log\zeta(s) = \log(\tfrac{1}{s-1})+\gamma(s-1)+O(s-1)^2$, we obtain
\begin{align}
P(s) & = \log\big(\tfrac{1}{s-1}\big) - c + (\gamma-h'(1))(s-1) + O\big((s-1)^2\big).
\end{align}
In particular, we have $0<P(s) - \log\big(\tfrac{\alpha}{s-1}\big)<1.4(s-1)$ for $s\in[1,2]$, where
\begin{align}\label{eq:alpha}
\alpha := e^{-c} = \exp\Big(-\sum_{m\ge2}\frac{P(m)}{m}\Big) = \prod_{m\ge2}\zeta(m)^{\mu(m)/m}=.729264\cdots.
\end{align}
Proceeding as in Theorem \ref{thm:intlogzetak}, we obtain
\begin{theorem}\label{thm:Pkint}
For $\alpha$ in \eqref{eq:alpha}, we have
\begin{align}
\frac{1}{k!}\int_1^\infty P(s)^k\;ds \ = \ \alpha \ + \ O(2^{-k}).
\end{align}
\end{theorem}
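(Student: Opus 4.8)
The plan is to mimic the three-claim decomposition used for Theorem \ref{thm:intlogzetak}, now with $P(s)$ in place of $\log\zeta(s)$ and with the shifted logarithmic singularity $\log\bigl(\tfrac{\alpha}{s-1}\bigr)$ replacing $\log\bigl(\tfrac{1}{s-1}\bigr)$. Concretely, I would split the integral as
\begin{align*}
\frac{1}{k!}\int_1^\infty P(s)^k\,ds \ = \ \frac{1}{k!}\int_1^2 \log\bigl(\tfrac{\alpha}{s-1}\bigr)^k ds \ + \ \frac{1}{k!}\int_1^2 \Bigl(P(s)^k - \log\bigl(\tfrac{\alpha}{s-1}\bigr)^k\Bigr) ds \ + \ \frac{1}{k!}\int_2^\infty P(s)^k\,ds,
\end{align*}
and show the first term is exactly $\alpha$, while the other two are $O(2^{-k})$.

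For the main term, substitute $u = -\log\bigl(\tfrac{s-1}{\alpha}\bigr)$, i.e.\ $s - 1 = \alpha e^{-u}$, turning $\int_1^{1+\alpha}\log\bigl(\tfrac{\alpha}{s-1}\bigr)^k ds$ into $\alpha\int_0^\infty u^k e^{-u}\,du = \alpha\, k!$; one must also check the leftover piece $\int_{1+\alpha}^2 \log\bigl(\tfrac{\alpha}{s-1}\bigr)^k ds$ is negligible — here $\log\bigl(\tfrac{\alpha}{s-1}\bigr)<0$ with $|\log\bigl(\tfrac{\alpha}{s-1}\bigr)|\le \log(1/\alpha)<1$ on $[1+\alpha,2]$, so this contributes at most $1/k!$ in absolute value, hence $O(2^{-k})$ after dividing by $k!$. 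For the tail $\int_2^\infty P(s)^k\,ds$, use $0<P(s)<2^{1-s}$ for $s\ge 2$ (indeed $P(s)<\log\zeta(s)<2^{1-s}$) exactly as in Theorem \ref{thm:intlogzetak}, giving a bound $<2^{-k}$.

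For the cross term, I would use the stated estimate $0<P(s)-\log\bigl(\tfrac{\alpha}{s-1}\bigr)<1.4(s-1)$ on $[1,2]$, write $P(s) < 1.4(s-1) - \log\bigl(\tfrac{s-1}{\alpha}\bigr)$, expand the $k$-th power by the binomial theorem, and integrate term by term. After the substitution $s-1\mapsto s$ this reduces to sums of integrals $\int_0^1 (s)^j\bigl[\log(\alpha/s)\bigr]^{k-j}\,ds$ for $j\ge 1$; splitting $\log(\alpha/s) = \log\alpha - \log s$ and expanding once more, or directly using a scaled version of \eqref{eq:intcombo}, each such integral evaluates to a ratio of a factorial over a power, and one obtains a hypergeometric-type series whose consecutive-term ratio is bounded by a constant $<1$ for $j\ge 2$, so the whole sum is $O(2^{-k})$. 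The factor $\alpha<1$ appearing in the logarithm only helps, since it makes the relevant powers smaller.

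The main obstacle — really the only subtlety beyond bookkeeping — is verifying the inequality $0<P(s)-\log\bigl(\tfrac{\alpha}{s-1}\bigr)<1.4(s-1)$ uniformly on all of $[1,2]$, not merely near $s=1$ where the Taylor expansion $P(s) = \log\bigl(\tfrac{\alpha}{s-1}\bigr) + (\gamma - h'(1))(s-1) + O((s-1)^2)$ makes it transparent with slope $\gamma - h'(1) = 0.577\cdots + 0.750\cdots \approx 1.33 < 1.4$. Away from $s=1$ one needs either an explicit bound on the error term in the expansion of $h(s) = \log\zeta(s) - P(s)$ together with the known expansion of $\log\zeta$, or a direct numerical check that $g(s):=\bigl(P(s)+\log(s-1)-\log\alpha\bigr)/(s-1)$ stays in $(0,1.4)$ on $[1,2]$; monotonicity/convexity considerations for $P(s)$ and $\log\zeta(s)$ on this interval make this routine but it is the step requiring genuine care. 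Everything else parallels Theorem \ref{thm:intlogzetak} verbatim.
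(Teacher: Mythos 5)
Your proof follows the paper's three-claim decomposition essentially verbatim: bound the tail $\int_2^\infty P(s)^k\,ds < 2^{-k}$ via $P(s) < \log\zeta(s) < 2^{1-s}$, reduce the main part to $\int_1^2 \log\bigl(\tfrac{\alpha}{s-1}\bigr)^k\,ds$, and control the cross term by the bound $0 < P(s)-\log\bigl(\tfrac{\alpha}{s-1}\bigr)<1.4(s-1)$ together with a binomial expansion giving a hypergeometric-type series $\ll 2^{-k}$. Your care over the main term is in fact warranted and goes slightly beyond the paper: the substitution $s-1=\alpha e^{-u}$ runs $u$ from $\log\alpha<0$ (not from $0$) to $\infty$, so $\int_1^2 \log\bigl(\tfrac{\alpha}{s-1}\bigr)^k\,ds = \alpha\,k! + O\bigl((-\log\alpha)^{k+1}\bigr)$ rather than exactly $\alpha\,k!$ as the paper's first claim states — a harmless slip since $-\log\alpha \approx 0.316 < 1/2$, but one you correctly flag via the leftover piece on $[1+\alpha,2]$. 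Your remaining concern, that $0<P(s)-\log\bigl(\tfrac{\alpha}{s-1}\bigr)<1.4(s-1)$ must be checked on all of $[1,2]$ and not merely near $s=1$, is also legitimate; the paper asserts it without justification, and it is the one step requiring a genuine (if routine) numerical verification.
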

\begin{proof}
The proof follows from the following three claims,
\begin{align}
\int_1^2 \log(\tfrac{\alpha}{s-1})^k\;ds \ = \ \alpha\, k!, \qquad  0 < \int_2^\infty P(s)^k\;ds < 2^{-k},
\end{align}
and
\begin{align}
0 < I_k \ \ll \ 2^{-k} \qquad\quad \textrm{for}\quad I_k:=\frac{1}{k!}\int_1^2 P(s)^k - \log(\tfrac{\alpha}{s-1})^k\;ds.
\end{align}
The first two hold, as in Theorem \ref{thm:intlogzetak}, recalling $P(s) < \log\zeta(s)$. For the third, as in Theorem \ref{thm:intlogzetak}, using $0<P(s) - \log\big(\tfrac{\alpha}{s-1}\big)<1.4(s-1)$ and expanding the binomial gives
\begin{align*}
I_k < \sum_{j=1}^{k} \frac{1.4^j}{j!}(j+1)^{-(k-j+1)} \ <  1.4\cdot2^{-k}+\frac{1.4^2}{2!}3^{1-k}k \ \ll \ 2^{-k}.
\end{align*}
Here we used the fact that the terms in the series decrease for $j$ up to around $k/\log k$ and increase thereafter. Thus the series is bounded by the first term, plus $k$ times the max of the terms $j=2,k$.
\end{proof}

These results culminate with the following exponentially-decaying upper bound for $f(\N_k)$.

\begin{theorem}\label{thm:Pk2k}
We have
\begin{align}
f(\N_k) = \int_1^\infty P_k(s)\;ds \ \le \ 1 \ + \ O(k\,2^{-k/2})
\end{align}
\end{theorem}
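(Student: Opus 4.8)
The plan is to start from the partition formula \eqref{eq:Pkpart} for $P_k(s)$ and control the integral $\int_1^\infty P_k(s)\,ds$ term by term over partitions of $k$. The dominant term is the partition $k = 1 + 1 + \cdots + 1$, which contributes $P(s)^k/k!$; by Theorem \ref{thm:Pkint} this integrates to $\alpha + O(2^{-k})$ — but note $\alpha = 0.729\cdots$, not $1$, so the partitions with larger parts must collectively contribute the remaining $\approx 0.27$. This is consistent, since morally $\sum_k \int_1^\infty P_k(s)\,ds\,z^k = \int_1^\infty \zeta(s)^z\cdots$ and one expects the full sum of all partition contributions to reconstruct the ``$1$''. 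So the strategy is: isolate the all-ones partition, and bound the sum over all \emph{other} partitions of $k$.

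First I would write, for $s \in [1,2]$, the estimate $P(js) \le P(j)$ for $j \ge 2$ (monotonicity) and more usefully $0 < P(js) \ll 2^{-j}$ for $j \ge 2$, $s \ge 1$, together with $P(s) \le \log\zeta(s) \le \log\frac{C}{s-1}$ near $s=1$. For a partition $\lambda$ of $k$ with $n_j$ parts equal to $j$, its contribution to $P_k(s)$ is $\prod_j \frac{1}{n_j!}(P(js)/j)^{n_j}$. Writing $m = n_1$ (the number of $1$'s) and $\ell = k - m = \sum_{j\ge2} j n_j$ for the part coming from parts $\ge 2$, the factor $\prod_{j\ge2}\frac{1}{n_j!}(P(js)/j)^{n_j}$ is bounded by a constant depending only on $\ell$ times $2^{-\ell}$ roughly (since each unit of $\ell$ forces a factor $P(js)$ with $j\ge2$, each $\ll 2^{-j} \le 2^{-1}\cdot 2^{-(j-1)}$). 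Splitting $\int_1^\infty = \int_1^2 + \int_2^\infty$: on $[2,\infty)$ one has $P_k(s) \le \zeta(2)^{?}\cdots$ — more simply $\int_2^\infty P_k(s)\,ds \le \int_2^\infty P_1(s)^{?}$; actually cleanest is to note $\sum_k P_k(s) = \zeta(s)$ so $\int_2^\infty P_k(s)\,ds < \int_2^\infty (\zeta(s)-1)\cdots$, but we want exponential decay, so instead use $P_k(s) \le P(s)^k/k! \cdot(\text{something})$? No — better: on $[2,\infty)$, $P_k(s) \le \frac{1}{k!}(\sum_j P(js)/j)^k \le \frac{1}{k!}(\log\zeta(s))^k$, wait that's an over-count but it IS an upper bound coming from dropping the partition constraint and it gives $\int_2^\infty P_k(s)\,ds \le \frac{1}{k!}\int_2^\infty(\log\zeta(s))^k ds < 2^{-k}$ by Theorem \ref{thm:intlogzetak}'s second claim (up to the $k!$ — actually that bound was for $\frac{1}{k!}$ already, giving $2^{-k}$).

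For the main range $[1,2]$: I would split the sum over partitions by the value $m = n_1 \in \{0,1,\ldots,k\}$ and set $\ell = k - m$. For fixed $\ell$, summing over all ways to write $\ell$ using parts $\ge 2$ gives a factor $a_\ell(s) := \sum_{\ell\text{'s partitions into parts}\ge2}\prod_{j\ge2}\frac{1}{n_j!}(P(js)/j)^{n_j}$, and one checks $\sum_{\ell\ge0} a_\ell(s) z^\ell = \exp(\sum_{j\ge2}P(js)z^j/j) = \zeta(s)^z P(s)^{-z}\cdots$ evaluated appropriately; at any rate $a_\ell(s) \le a_\ell(2)\cdot(\text{bounded}) \ll 2^{-\ell}$ uniformly for $s\in[1,2]$, since each $a_\ell$ is increasing... roughly $a_\ell(s) \le e^{h(s)} \le e^{h(1)} = \alpha^{-1}$-ish when summed, but individually $a_\ell(s) \ll C^{-\ell}$ with $C > 1$; the key input is $h(s) = \log\zeta(s) - P(s)$ is bounded on $[1,2]$ and its ``generating weight'' per unit of $\ell$ costs at least a constant factor $< 1$. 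Then
\begin{align*}
\int_1^2 P_k(s)\,ds \le \sum_{m=0}^{k}\frac{1}{m!}\int_1^2 P(s)^m\, a_{k-m}(s)\,ds \le \sum_{\ell=0}^{k} \Big(\sup_{[1,2]} a_\ell(s)\Big)\frac{1}{(k-\ell)!}\int_1^2 P(s)^{k-\ell}\,ds.
\end{align*}
The $\ell = 0$ term is $\frac{1}{k!}\int_1^2 P(s)^k ds = \alpha + O(2^{-k})$ by Theorem \ref{thm:Pkint}. For $\ell \ge 1$, $\frac{1}{(k-\ell)!}\int_1^2 P(s)^{k-\ell}ds \approx \alpha_{k-\ell}$ which by the same theorem is $O(1)$ (in fact $\to \alpha$), so these terms are $\ll \sum_{\ell\ge1} 2^{-\ell}\cdot O(1) = O(1)$ — that only gives a bound $\ll 1$, not $1 + O(k2^{-k/2})$. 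So I need a \textbf{sharper} treatment of the $\ell\ge1$ terms: here the point is that $\sum_{\ell\ge0}a_\ell(s)\,\frac{P(s)^{k-\ell}}{(k-\ell)!}$ times appropriate bookkeeping should telescope via the identity $\sum_m P_m(s) = \zeta(s)$ restricted, OR — cleaner — use that $\sum_{\ell=0}^\infty a_\ell(s) w^\ell = \exp(h(s))\cdot(\text{correction for }j{=}1)$; honestly the slickest route is to \emph{not} split at all but observe $P_k(s) \le \frac{1}{k!}\big(P(s) + c_0\big)^k$ on $[1,2]$ for a suitable constant absorbing $\sum_{j\ge2}P(js)/j \le h(1) = c$, i.e. $P_k(s) \le \frac{1}{k!}(P(s)+c)^k = \frac{1}{k!}(\log\frac{e^c\cdot\alpha}{s-1} + O(s-1))^k = \frac{1}{k!}(\log\frac{1}{s-1}+O(s-1))^k$ since $e^c\alpha = 1$! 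Then $\frac{1}{k!}\int_1^2(\log\frac1{s-1} + O(s-1))^k ds = \frac{1}{k!}\int_0^1(-\log s + O(s))^k ds = 1 + O(k2^{-k/2})$ by a binomial expansion identical to Theorem \ref{thm:intlogzetak}'s third claim (the $O(s)$ perturbation with coefficient possibly $>1$ forces the $2^{-k/2}$ rather than $2^{-k}$, and the $k$ factor comes from the number of non-negligible binomial terms). Combined with $\int_2^\infty P_k(s)\,ds < 2^{-k}$, this yields $f(\N_k) \le 1 + O(k2^{-k/2})$.

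The main obstacle is establishing the pointwise bound $P_k(s) \le \frac{1}{k!}(P(s)+c)^k$ for $s\in[1,2]$ with $c = h(1) = \sum_{m\ge2}P(m)/m$ (so that $e^c\alpha = 1$ exactly) — i.e. showing that replacing each $P(js)$, $j\ge2$, by its value $P(j)$, then each $P(js)/j$ contribution is dominated after summing over partitions. Concretely one needs $\sum_{\lambda\vdash k}\prod_j\frac{1}{n_j!}(P(js)/j)^{n_j} \le \frac{1}{k!}\big(\sum_{j\ge1}P(j\wedge?)\cdots\big)$; the clean justification is the generating-function inequality: for $0 \le z \le 1$ and $s\in[1,2]$,
\begin{align*}
\sum_{k\ge0}P_k(s)z^k = \exp\!\Big(\sum_{j\ge1}\frac{P(js)}{j}z^j\Big) \le \exp\!\Big(\frac{P(s)z}{1}+\sum_{j\ge2}\frac{P(j)}{j}\Big)\cdot\text{?}
\end{align*}
— this doesn't quite factor as a power, so instead one compares coefficients of $\exp(P(s)z + cz) \ge \exp(\sum_j P(js)z^j/j)$ which holds coefficient-wise since $P(s) + c \ge P(s)$ at order $z^1$... no, at order $z^1$ both sides have $P(s)$; at order $z^j$, $j\ge2$, LHS coefficient of $e^{(P(s)+c)z}$ is $\frac{(P(s)+c)^j}{j!}$ while $e^{\sum P(js)z^j/j}$ has a messier coefficient — one must check $\frac{(P(s)+c)^k}{k!} \ge [z^k]\exp(\sum_j P(js)z^j/j) = P_k(s)$, equivalently $e^{(P(s)+c)z} - e^{\sum_j P(js)z^j/j}$ has nonnegative Taylor coefficients, which follows because $(P(s)+c)z - \sum_j P(js)z^j/j = cz - \sum_{j\ge2}P(js)z^j/j = \sum_{j\ge2}\frac{P(j)-P(js)}{j}z^j \ge 0$ for $s\ge1$ has nonnegative coefficients, and $\exp$ of a series with nonnegative coefficients added to another such series dominates coefficient-wise. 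That is the crux; everything after is the binomial-integral computation already rehearsed in Theorems \ref{thm:intlogzetak} and \ref{thm:Pkint}.
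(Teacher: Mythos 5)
Your ``slickest route'' rests on the pointwise inequality $P_k(s) \le \frac{1}{k!}(P(s)+c)^k$ on $[1,2]$, and both the inequality and your argument for it fail. The algebra error is in the line
\begin{align*}
cz - \sum_{j\ge2}\frac{P(js)}{j}z^j \;=\; \sum_{j\ge2}\frac{P(j)-P(js)}{j}z^j,
\end{align*}
which treats the degree-one monomial $cz = \big(\sum_{j\ge2}P(j)/j\big)\,z$ as if it were the series $\sum_{j\ge2}\frac{P(j)}{j}z^j$. In truth the exponent difference has coefficient $+c$ at $z^1$ and $-P(js)/j < 0$ at every $z^j$ with $j\ge2$, so it is \emph{not} a series with nonnegative coefficients, and coefficient-wise domination of the exponentials does not follow. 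Moreover the inequality itself is simply false away from $s=1$: at $s=2$ the single term $n=2^k$ already gives $P_k(2) \ge 4^{-k}$, while $(P(2)+c)^k/k! \approx (0.768)^k/k!$, and $4^{-k}\cdot k!/(0.768)^k = k!/(3.07)^k > 1$ once $k\ge 7$. A similar problem afflicts your tail estimate: $\frac{1}{k!}(\log\zeta(s))^k$ expands over all $k$-tuples $(j_1,\dots,j_k)$ (i.e.\ $\sum n_j = k$ \emph{parts}), not over partitions with $\sum j n_j = k$, so it does not dominate $P_k(s)$ term by term; indeed one can check $P_2(2) > \tfrac12(\log\zeta(2))^2$.

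The route you abandoned is actually the right one, and the missing idea is small. Having written
\begin{align*}
f(\N_k)\ \le\ \sum_{\ell=0}^{k}\Big(\sup_{[1,\infty)}a_\ell(s)\Big)\,\frac{1}{(k-\ell)!}\int_1^\infty P(s)^{k-\ell}\,ds,
\end{align*}
you treated $\frac{1}{(k-\ell)!}\int P(s)^{k-\ell}\,ds$ as merely $O(1)$ for $\ell\ge1$, which indeed only yields $O(1)$. But Theorem \ref{thm:Pkint} gives the much sharper $\frac{1}{(k-\ell)!}\int_1^\infty P(s)^{k-\ell}\,ds = \alpha + O\big(2^{-(k-\ell)}\big)$, with an error that decays in the number $k-\ell$ of $1$-parts. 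Using $P(js)\le P(j)$ ($j\ge2$, $s\ge1$) one has $a_\ell(s)\le Q(\ell)$, so the main term collapses to $\alpha\sum_{\ell}Q(\ell)\le \alpha\cdot\alpha^{-1}=1$ via \eqref{eq:alphaQ}, and the error term $\sum_{\ell}Q(\ell)\,O(2^{-(k-\ell)})$ is handled by balancing at $\ell = k/2$: for $\ell\le k/2$ the factor $2^{-(k-\ell)}\le 2^{-k/2}$, and for $\ell> k/2$ the weight satisfies $\sum_{\ell>k/2}Q(\ell)\ll k\,2^{-k/2}$ by Lemma \ref{lma:Qk}. That balancing is precisely where the exponent $k/2$ and the factor $k$ come from; they are not artifacts of a binomial perturbation with a constant slightly larger than $1$ as you suggest. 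Finishing the strategy you started --- rather than the coefficient-domination shortcut --- is what the paper does.
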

\begin{proof}
We first apply the combinatorial identity \eqref{eq:Pkpart},
\begin{align*}
f(\N_k) = \int_1^\infty P_k(s)\;ds \le \sum_{n_1 + 2n_2 +\cdots = k}\prod_{j\ge2} \frac{1}{n_j!}\big(P(j)/j\big)^{n_j}\int_1^\infty \frac{P(s)^{n_1}}{n_1!} \;ds
\end{align*}
using $P(js)\le P(j)$ for $j\ge2, s\ge1$. Then by Theorem \ref{thm:Pkint},
\begin{align*}
f(\N_k) \ \le \sum_{n_1 + 2n_2 +\cdots = k}\prod_{j\ge2} \frac{1}{n_j!}\big(P(j)/j\big)^{n_j}\big(\alpha + O(2^{-n_1})\big) = \sum_{n_1=0}^k Q(k-n_1)\big(\alpha + O(2^{-n_1})\big)
\end{align*}
where $Q(m) := \sum_{2n_2 + 3n_3+\cdots = m}\prod_{j\ge2} \frac{1}{n_j!}\big(P(j)/j\big)^{n_j}$. Then by definition of $\alpha,c$ from \eqref{eq:alpha},
\begin{align}\label{eq:alphaQ}
\frac{1}{\alpha} = e^c = \exp\Big(\sum_{j\ge 2}P(j)/j\Big) = \sum_{m\ge0}\sum_{2n_2 + 3n_3+\cdots = m}\prod_{j\ge2} \frac{1}{n_j!}\big(P(j)/j\big)^{n_j} =\sum_{m\ge0}Q(m)
\end{align}
By Lemma \ref{lma:Qk} below, we have $\sum_{m\ge y}Q(m) \ll y\,2^{-y}$. Hence using $y=k/2$,
\begin{align*}
f(\N_k) \le \sum_{m=0}^k Q(m)\big(\alpha + O(2^{m-k})\big) & \le \big(\alpha + O(2^{y-k})\big)\sum_{0\le m\le y} Q(m) + O(y\,2^{-y})\\
& \le 1 \ + \ O(k\,2^{-k/2})
\end{align*}
by \eqref{eq:alphaQ}. This completes the proof.
\end{proof}

\begin{lemma}\label{lma:Qk}
We have
\begin{align}
Q(k)=\sum_{2n_2 \cdots = m}\prod_{j\ge2} \frac{1}{n_j!}\big(P(j)/j\big)^{n_j} \ \ll \ k\,2^{-k}.
\end{align}
\end{lemma}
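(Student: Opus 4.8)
The plan is to realize $Q(m)$ as the $m$-th coefficient of the generating function $\prod_{j\ge2}\exp\big((P(j)/j)z^j\big)$ and extract tail bounds by a Cauchy-type estimate, or equivalently by a direct convexity/ratio argument on the partition sum. First I would record the identity
\begin{align}
\sum_{m\ge0}Q(m)z^m \ = \ \prod_{j\ge2}\exp\Big(\frac{P(j)}{j}z^j\Big) \ = \ \exp\Big(\sum_{j\ge2}\frac{P(j)}{j}z^j\Big),
\end{align}
which converges for $|z|<2$ since $P(j)\le 2^{-j}\zeta(j)\cdot(\text{const})$, more precisely $P(j)/j \ll 2^{-j}$; in fact $\sum_{j\ge2}(P(j)/j)z^j$ converges for $|z|<2$. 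Then $\sum_{m\ge y}Q(m) \le z^{-y}(1-1/z)^{-1}\sum_{m\ge0}Q(m)z^m$ for any fixed $z\in(1,2)$, but this only gives $z^{-y}$ decay for $z<2$, losing the clean factor $2$. To get exactly $k\,2^{-k}$ I would instead argue directly on the partition structure.

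The key observation is that the dominant contribution to $Q(m)$ comes from partitions using mostly the part $j=2$: since $P(2)/2 = .2261\cdots$ is the largest of the quantities $P(j)/j$ and the parts are at least $2$, a partition of $m$ contributes roughly $(P(2)/2)^{m/2}/(m/2)!$ at best, which is far smaller than $2^{-m}$. So I would split the sum over partitions of $m$ according to the "excess" $e := \sum_{j\ge2}(j-2)n_j = m - 2\sum_{j\ge2}n_j \ge 0$. For fixed total weight $N := \sum n_j$ and excess $e$, the relevant monomials are bounded, after pulling out $(P(2)/2)^{n_2}$, by products of $P(j)/j \le C\,2^{-j}$ for $j\ge3$; summing the geometric-type contributions shows that partitions with excess $e$ contribute $O\big(2^{-e}\big)$ relative to the pure-$2$'s term of the same length. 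Combined with the bound $(P(2)/2)^N/N! \ll 2^{-2N}$ coming from $P(2)/2 < 1/4$ (indeed $P(2) = .4522\cdots < 1/2$), one gets $Q(m) \ll \sum_{2N + e = m}2^{-2N}2^{-e}\cdot(\#\text{ of such configurations})$. The number of partitions of $e$ into parts $\ge1$ is $p(e) = e^{O(\sqrt e)} = o(2^{\epsilon e})$, so this geometric-times-subexponential sum telescopes to $O(m\,2^{-m})$, the polynomial factor $m$ absorbing the $p(e)$ and the count of splittings.

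The main obstacle, I expect, is making the "excess" bookkeeping clean enough to land exactly on $2^{-m}$ rather than $c^{-m}$ for some $c<2$: one must verify that $P(2)/2 < 1/4$ with enough room, and that the parts $j\ge3$ each cost a genuine factor $\le \tfrac12$ per unit of the part (i.e. $P(j)/j \le D\cdot 2^{-j}$ with the single part $j$ contributing $2^{-j}$ and "using up" $j$ units of $m$), so that every unit of $m$ is charged at least a factor $2^{-1}$ while the subexponential overcounting is swallowed by the extra factor $m$. An alternative, slicker route avoiding partition combinatorics: apply the saddle-point / Cauchy bound $Q(m) \le r^{-m}\exp\big(\sum_{j\ge2}(P(j)/j)r^j\big)$ and optimize $r$ near $r=2$, noting $\sum_{j\ge2}(P(j)/j)2^j$ converges (since $P(j) \sim 2^{-j}$, the term $j$ is $\asymp 1/j$, so actually this diverges logarithmically) — hence take $r = 2(1 - 1/m)$, giving $r^{-m} \ll 2^{-m}$ and $\exp(\sum (P(j)/j)r^j) \ll m$ from the logarithmic divergence truncated at $j\asymp \log m$; this yields $Q(m) \ll m\,2^{-m}$ directly and is probably the cleanest way to finish.
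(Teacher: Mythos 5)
Your second route (the Cauchy/saddle-point bound) is correct and is a genuinely different argument from the paper's; your first route (the ``excess'' bookkeeping) is, as you yourself flag, not a proof as written — it asserts rather than shows that the count of partitions of the excess is absorbed, and I would drop it. The paper's proof instead uses the elementary bound $P(j)<2\cdot2^{-j}$ for $j\ge2$ (from $P(2)<1/2$) to dominate $Q(k)$ by the analogous partition sum $\widetilde Q(k)$ with $P(j)$ replaced by $2\cdot2^{-j}$, for which the generating function has the closed form
\begin{align*}
\sum_{k\ge0}\widetilde Q(k)z^k \ = \ \exp\Big(2\sum_{j\ge2}\frac{(z/2)^j}{j}\Big) \ = \ e^z(1-z/2)^{-2},
\end{align*}
whence $\widetilde Q(k)=2^{-k}\sum_{n=0}^k(k-n+1)\,2^n/n!<e^2\,k\,2^{-k}$ by multiplying out the two Taylor series. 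Both proofs start from the same generating function $g(z)=\exp\big(\sum_{j\ge2}(P(j)/j)z^j\big)$: the paper dominates the exponent term by term so as to reach a rational-times-exponential whose coefficients can be read off exactly, while you keep $g$ intact, use positivity of the coefficients to get $Q(m)\le r^{-m}g(r)$, and take $r=2(1-1/m)$, with the logarithmic blow-up $-\log(1-r/2)=\log m$ supplying the extra factor $m$ after exponentiation (the tail $P(j)-2^{-j}\ll3^{-j}$ keeps the rest of the exponent bounded as $r\to2$). Your route is more robust — it would go through even without a clean closed form for the GF — at the cost of a slightly less explicit constant; the paper's is more elementary and fully explicit. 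Do tidy the self-correcting aside about whether $\sum(P(j)/j)2^j$ converges (it diverges, logarithmically, and that divergence is exactly what produces the factor $m$) before committing this to a final write-up.
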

\begin{proof}
First $P(2) < 2\cdot 2^{-2}$ implies $P(j) < 2\cdot 2^{-j}$ for $j\ge2$, and so
\begin{align*}
Q(k) < \sum_{2n_2+3n_3\cdots = k}\prod_{j\ge2} \frac{1}{n_j!}\big(2\cdot 2^{-j}/j\big)^{n_j} =: \widetilde Q(k).
\end{align*}
Then we have a power series identity
\begin{align*}
\sum_{k\ge0}\widetilde Q(k) z^{k} &= \sum_{k\ge0}\sum_{2n_2+3n_3\cdots = k}\prod_{j\ge2} \frac{1}{n_j!}\big(2\cdot(z/2)^j/j\big)^{n_j} = \prod_{j\ge2}\sum_{n_j\ge0}\frac{1}{n_j!}\big(2\cdot(z/2)^j/j\big)^{n_j}\\
& = \prod_{j\ge2}\exp\Big(2\cdot(z/2)^j/j\Big) = \exp\Big(2\sum_{j\ge2}\frac{(z/2)^j}{j}\Big) = \exp\Big(z - 2\log(1-z/2)\Big)\\
& = e^z\cdot (1-z/2)^{-2} = \sum_{n\ge0} \frac{z^n}{n!} \sum_{m\ge0} (m+1)(z/2)^m.
\end{align*}
Hence equating coefficients of $z^k$, we deduce
\begin{align}
\widetilde Q(k) = \sum_{n+m=k}\frac{1}{n!}\frac{m+1}{2^m} = 2^{-k}\sum_{n=0}^k(k-n+1)\frac{2^n}{n!} < k2^{-k} e^2 \ll k2^{-k}.
\end{align}
This completes the proof.
\end{proof}

While a corresponding lower bound to $f(\N_k)$ as in Theorem \ref{thm:Pk2k} remains out of reach, the zeta function method does produce strong enough lower bounds to show $f(\N_k)$ is minimal when $k=6$.

\subsection{A global minimum via zeta functions}
In view of Proposition \ref{prop:Pkpart}, the integral $(1/k!)\int_1^\infty P(s)^k\;ds$ in Theorem \ref{thm:Pkint} is a lower bound for $f(\N_k) = \int_1^\infty P_k(s)\;ds$, and constitutes the first of the terms in the identity \eqref{eq:Pkpart}, one per partition of $k$. Note the terms of partitions built from small parts contribute the most. So by incorporating the terms for the partitions $k = 1\cdot(k-j) + j$ and $k = 1\cdot(k-j-2) + 2 + j$ for $j\le 6$, we may obtain a sufficiently tight lower bound on $f(\N_k)$ to conclude the following.

\begin{theorem}
We have $f(\N_6) < f(\N_k)$ for all positive integers $k\neq6$.
\end{theorem}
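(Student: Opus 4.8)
The plan is to split the statement into a \emph{small-$k$} range, where $f(\N_k)$ is computed directly with rigorous error control, and a \emph{large-$k$} range, where the zeta-function lower bounds developed above are invoked. Fix an explicit (moderate) cutoff $K_0$. For $2\le k\le K_0$ with $k\ne 6$ one verifies $f(\N_k)>f(\N_6)$ directly; for $k>K_0$ one exhibits a lower bound for $f(\N_k)$ exceeding an explicit upper bound for $f(\N_6)$. The case $k=1$ is immediate, since $f(\N_1)\ge \tfrac1{2\log2}+\tfrac1{3\log3}>1>f(\N_6)$.

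\textbf{The small-$k$ range.} Here one evaluates $f(\N_k)=\int_1^\infty P_k(s)\,ds$ with two-sided error control, $P_k$ being the explicit polynomial in $P(s),P(2s),\dots,P(ks)$ of Proposition~\ref{prop:Pkpart}. Split $\int_1^\infty=\int_1^{1+\eta}+\int_{1+\eta}^{2}+\int_2^{\infty}$. On $[2,\infty)$ use $P(js)\le P(s)^j$, hence $P_k(s)\le P(s)^k\le P(2)^k\,2^{k(2-s)}$ by monotonicity of $s\mapsto 2^sP(s)$, so this tail is $<2^{-k}/(k\log2)$. Near $s=1$ write $P(s)=\log\tfrac1{s-1}+g(s)$ with $g$ bounded analytic and $P(js)$ analytic for $j\ge2$, reducing $\int_1^{1+\eta}P_k(s)\,ds$ to a short sum of incomplete-gamma integrals $\int_1^{1+\eta}\big(\log\tfrac1{s-1}\big)^i\,ds$ with explicit bounds. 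On $[1+\eta,2]$ apply validated quadrature, computing each $P(ms)$ from the rapidly convergent $P(ms)=\sum_{d\ge1}\tfrac{\mu(d)}{d}\log\zeta(mds)$ together with rigorous bounds for $\zeta$. To roughly five correct digits this yields $f(\N_6)<0.98882<f(\N_k)$ for all $k\in\{2,3,4,5,7,8,\dots,K_0\}$.

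\textbf{The large-$k$ range.} Bound $f(\N_k)=\int_1^\infty P_k(s)\,ds$ from below by keeping only finitely many (positive) terms of the partition identity \eqref{eq:Pkpart}: the pure term $n_1=k$ together with the partitions $k=1\!\cdot\!(k-j)+j$ and $k=1\!\cdot\!(k-j-2)+2+j$ for $2\le j\le6$. Each retained term has the shape $\tfrac1{n_1!}\int_1^\infty P(s)^{n_1}\Phi(s)\,ds$ with $n_1=k-O(1)$ and $\Phi(s)$ a positive constant times a product of at most two factors $P(js)/j$, $j\ge2$, hence $\Phi$ decreasing on $[1,2]$. To bound such a term below, restrict to $s\in[1,1+\delta]$ with $\delta=\alpha e^{-n_1/2}$; then $\Phi(s)\ge\Phi(1+\delta)\ge\Phi(1)-O(e^{-n_1/2})$, and using $P(s)>\log\tfrac{\alpha}{s-1}$ on $[1,2]$ (the inequality recorded just before Theorem~\ref{thm:Pkint}) with the substitution $y=\log\tfrac{\alpha}{s-1}$,
\begin{align*}
\frac1{n_1!}\int_1^{1+\delta}P(s)^{n_1}\,ds \ \ge\ \frac{\alpha}{n_1!}\int_{n_1/2}^{\infty}y^{n_1}e^{-y}\,dy \ =\ \alpha\Big(1-O\big(\sqrt{n_1}\,(e^{1/2}/2)^{n_1}\big)\Big).
\end{align*}
Multiplying and summing over the $O(1)$ retained partitions gives $f(\N_k)\ge\alpha\kappa-O\big(k\,(e^{1/2}/2)^k\big)$, where $\kappa=1+\sum_{j=2}^{6}\tfrac{P(j)}{j}+\sum_{j=2}^{6}(\text{second-family coefficient})$ is the matching partial sum of $e^{c}=1/\alpha=\sum_{m\ge0}Q(m)$ from \eqref{eq:alphaQ}. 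A direct evaluation of these few terms gives $\alpha\kappa>0.99$, which beats the threshold $0.98882$; hence $f(\N_k)>f(\N_6)$ once $k\ge K_0$. (Keeping more partitions, say all with parts $\le M$, raises $\alpha\kappa$ to $\alpha\sum_{m\le M}Q(m)$, which tends to $1$ by Lemma~\ref{lma:Qk}, shrinking $K_0$ at the cost of a larger—but still only polynomially many—family of terms to estimate.)

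\textbf{Main obstacle.} The delicate part is the small-$k$ range, and within it the pair $k=5,6$: the data suggest $f(\N_5)-f(\N_6)\approx1.3\times10^{-4}$, so the validated computation must resolve both values to about five significant figures, which forces genuine care in handling the logarithmic singularity of $P_k$ at $s=1$ and in controlling the errors of both the quadrature and the $\zeta$-evaluations. The large-$k$ estimate is by comparison soft: one only needs the explicit constant $\alpha\kappa$ to beat $f(\N_6)$ with enough slack to absorb the $O\big(k\,(e^{1/2}/2)^k\big)$ tail for all $k>K_0$, i.e.\ to take $K_0$ small enough to fall within reach of the finite computation.
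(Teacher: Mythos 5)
Your proposal has the same overall skeleton as the paper's proof: verify the claim directly for small $k$, then for large $k$ extract a lower bound for $f(\N_k)=\int_1^\infty P_k(s)\,ds$ by keeping a finite set of (positive) partition terms from \eqref{eq:Pkpart} and bounding each retained integral from below, comparing the result to $f(\N_6)$. You even choose the same family of partitions ($n_1=k$; $1\cdot(k-j)+j$ for $j\le 6$; $1\cdot(k-j-2)+2+j$ for $j\le 6$). Where you diverge is in \emph{how} you bound the retained integral $\frac{1}{n_1!}\int_1^\infty P(s)^{n_1}\Phi(s)\,ds$. The paper Taylor-expands $\Phi$ to first order and integrates over all of $[1,2]$, producing an explicit, manifestly increasing lower bound $\beta_k$ whose negative correction terms decay like $2^{-n_1}$; hence $\beta_{20}>0.99$ already suffices, and $K_0=20$. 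You instead restrict to a short interval $[1,1+\delta]$ with $\delta=\alpha e^{-n_1/2}$ and use $\Phi(1+\delta)\ge\Phi(1)-O(e^{-n_1/2})$ together with the incomplete-gamma truncation. That is valid, but the truncation error is only $O(\sqrt{n_1}\,(e^{1/2}/2)^{n_1})$, i.e.\ roughly $(0.82)^{k}$ rather than $2^{-k}$, so even with $\alpha\kappa>0.99$ the slack of about $0.002$ over $f(\N_6)\approx 0.9888$ forces a considerably larger cutoff $K_0$ (on the order of $30$--$50$ depending on constants), and correspondingly more validated computation in the small-$k$ range. One can tighten your scheme by optimizing $\delta$ (the balance point is $\delta\sim\alpha e^{-n_1/e}$, giving errors $\sim(e^{-1/e})^{n_1}\approx(0.69)^{n_1}$), but to match the paper's $2^{-n_1}$ decay one really needs to integrate the linear Taylor term over the full interval as the paper does, rather than truncate near $s=1$.

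On the small-$k$ side, your treatment is actually more careful than the paper's proof text, which simply invokes the Mathematica/Pari computations of Figures~\ref{fig:fNk} and \ref{fig:fNk2}; the paper only gestures at rigorous error control in the later ``Remark on integration,'' where it bounds the tail $s\ge 10$ and the neighborhood of the singularity via Lemma~\ref{lma:int1}. Your sketch of a three-piece split with $P(js)\le P(s)^j$ giving $P_k(s)\le P(s)^k$ on $[2,\infty)$, validated quadrature in the middle, and incomplete-gamma bounds near $s=1$ is sound and fills in the gap more explicitly. The bottom line is that your argument is correct but quantitatively looser in the large-$k$ regime, which shifts more burden onto the numerical verification.
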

\begin{proof}
We have already verified the claim directly for $k\le 20$, see Figures \ref{fig:fNk},\ref{fig:fNk2}. Thus it suffices to assume $k>20$. By Proposition \ref{prop:Pkpart},
\begin{align}\label{eq:Pkcombshort}
f(\N_k) = \int_1^\infty P_k(s)\;ds \ & > \ \frac{1}{k!}\int_1^2 P(s)^k\;ds + \sum_{j=2}^6\frac{\int_1^2 P(s)^{k-j}P(js)\;ds}{j(k-j)!} \nonumber\\
& \quad + \ \frac{\int_1^2 P(s)^{k-4}P(2s)^2\;ds}{2!2^2(k-4)!} + \sum_{j=3}^6\frac{\int_1^2 P(s)^{k-j-2}P(2s)P(js)\;ds}{2j(k-j-2)!}.
\end{align}
For every $k\ge1$, we have
\begin{align}
\int_1^\infty P(s)^k\;ds > \int_0^1 \log\big(\tfrac{\alpha}{s}\big)^k\;ds = \alpha\,k!\,.
\end{align}
Using the first order Taylor series $P(js) > P(j) + P'(j)(s-1)$ for $j\ge2$,
\begin{align*}
\int_1^2 P(s)^{k-j}P(js)\;ds \ > & \ P(j)\int_0^1 \log\big(\tfrac{\alpha}{s}\big)^{k-j}\;ds+P'(j)\int_0^1 \log\big(\tfrac{\alpha}{s}\big)^{k-j}s\;ds\\
& = \alpha(k-j)!\Big(P(j) + \frac{\alpha P'(j)}{2^{k-j}}\Big)
\end{align*}
and
\begin{align*}
\int_1^2 P(s)^{k-j-2}& P(2s)P(js) \;ds \ > \ (P'(2)P(j)+P(2)P'(j))\int_0^1 \log\big(\tfrac{\alpha}{s}\big)^{k-j}s\;ds\\
& \qquad + \ P(2)P(j)\int_0^1 \log\big(\tfrac{\alpha}{s}\big)^{k-j-2}\;ds \ + \  P'(2)P'(j)\int_0^1 \log\big(\tfrac{\alpha}{s}\big)^{k-j-2}s^2\;ds\\
& = \ \alpha(k-j-2)!\Big(P(2)P(j) + \frac{\alpha}{2^{k-j-1}}[P'(2)P(j)+P(2)P'(j)] + \frac{\alpha^2 P'(2)P'(j)}{3^{k-j-1}}\Big).
\end{align*}
Hence plugging back into \eqref{eq:Pkcombshort},
\begin{align}
f(\N_k) & > \alpha\bigg[1 + \sum_{j=2}^6 \frac{P(j)+\alpha P'(j)/2^{k-j}}{j} \ + \ \frac{1}{8}[P(2)^2+ \frac{\alpha P(2)P'(2)}{2^{k-4}} + \frac{\alpha^2 P'(2)^2}{3^{k-3}}] \nonumber\\
& \qquad \ + \
\sum_{j=3}^6 \frac{1}{2j}\bigg(P(2)P(j) + \frac{\alpha}{2^{k-j-1}}[P'(2)P(j)+P(2)P'(j)] + \frac{\alpha^2 P'(2)P'(j)}{3^{k-j-1}}\bigg)\bigg] =: \beta_k.
\end{align}
Finally, since the lower bound $\beta_k$ is clearly increasing in $k$, for $k>20$ we obtain\footnote{In Mathematica, we compute $\beta_{20} = 0.991049\cdots.$}
\begin{align}
f(\N_k) > \beta_k > \beta_{20} > .99 > f(\N_6).
\end{align}
This completes the proof.
\end{proof}

\subsection{Remark on integration}
The integral $f(\N_k)=\int_1^\infty P_k(s)\;ds$ is computed numerically with high confidence. Nevertheless, in this section, we supplement the data by rigorously bounding the integral $f(\N_k)=\int_1^\infty P_k(s)\;ds$ at the tail and near the singularity $s=1$. For $k\ge2$, the tail $s\ge 10$ contributes
\begin{align*}
\int_{10}^\infty P_k(s)\;ds \le 2\int_{10}^\infty 2^{-ks}\;ds = \frac{2^{1-10k}}{k\log 2} < 2^{-10k}
\end{align*}
Next for $\epsilon>0$,
\begin{align*}
\int_1^{1+\epsilon}\log(\tfrac{1}{s-1})^k\;ds = \int_{\log(1/\epsilon)}^\infty u^k e^{-u}\;du
\end{align*}
And since $P_k(s) \le \log(\tfrac{1}{s-1})^k$ for $s\in(1,1+e^{-2k})$ by Lemma \ref{lma:int1} below, letting $\epsilon_k=e^{-4k}$,
\begin{align*}
\int_1^{1+\epsilon_k} P_k(s)\;ds < \frac{1}{k!}\int_{4k}^\infty u^k e^{-u}\;du = \frac{\Gamma(k+1,4k)}{k!}
\end{align*}
for the incomplete Gamma function $\Gamma(k,z)=\int_z^\infty u^{k-1} e^{-u}\;du$, which is rapidly computable.

\begin{lemma}\label{lma:int1}
For all $k\ge1$, $s\in(1,1+e^{-2k})$, we have 
\begin{align}
P_k(s) \ \le \ \frac{1}{k!}\log(\tfrac{1}{s-1})^k.
\end{align}
\end{lemma}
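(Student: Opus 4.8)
The plan is to prove the pointwise bound $P_k(s) \le \frac{1}{k!}\log(\tfrac{1}{s-1})^k$ on the tiny interval $s\in(1,1+e^{-2k})$ by comparing $P_k(s)$ with $P(s)^k/k!$ and then controlling $P(s)$ near the singularity. First I would recall from Proposition~\ref{prop:Pkpart} the partition identity
\begin{align*}
P_k(s) = \sum_{n_1+2n_2+\cdots+kn_k=k}\ \prod_{j=1}^k\frac{1}{n_j!}\big(P(js)/j\big)^{n_j},
\end{align*}
all of whose terms are nonnegative for $s>1$. The leading term (the partition $k=1+1+\cdots+1$) is exactly $P(s)^k/k!$, so the natural strategy is to show that for $s$ this close to $1$ the leading term already dominates $\frac{1}{k!}\log(\tfrac{1}{s-1})^k$ by a wide margin, and that the total contribution of all the remaining partitions is comparatively negligible, since those involve a factor $P(js)$ with $j\ge2$, which stays bounded ($P(js)\le P(2s)\le P(2)<1$) while $P(s)\to\infty$ like $\log\tfrac{1}{s-1}$.

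The key quantitative input is the expansion $P(s)=\log(\tfrac{1}{s-1})-c+O(s-1)$ with $c=.315718\cdots$, derived earlier in the excerpt; in particular for $s\in(1,1+e^{-2k})$ we have $\log\tfrac{1}{s-1}>2k$, so $P(s) > \log\tfrac{1}{s-1} - 1 \ge (1-\tfrac{1}{2k})\log\tfrac{1}{s-1}$. Raising to the $k$-th power,
\begin{align*}
\frac{P(s)^k}{k!} \ > \ \big(1-\tfrac{1}{2k}\big)^k\,\frac{\log(\tfrac{1}{s-1})^k}{k!} \ > \ \tfrac12\,\frac{\log(\tfrac{1}{s-1})^k}{k!},
\end{align*}
so the leading term alone is already more than half the target. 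It remains to bound $P_k(s) - P(s)^k/k!$, i.e.\ the sum over all partitions of $k$ other than $(1^k)$. Every such partition has some part $j\ge2$, so each term carries at least one factor $P(js)/j \le P(2)/2 < 1/3$ and has total "weight" $n_1 \le k-2$ among the parts equal to $1$. Grouping by $m := k - n_1 \ge 2$, one gets a clean bound of the shape
\begin{align*}
P_k(s) - \frac{P(s)^k}{k!} \ \le \ \sum_{m=2}^k \frac{P(s)^{k-m}}{(k-m)!}\, Q(m), \qquad Q(m) = \sum_{2n_2+3n_3+\cdots=m}\prod_{j\ge2}\frac{1}{n_j!}\big(P(j)/j\big)^{n_j},
\end{align*}
using $P(js)\le P(j)$ for $j\ge2$. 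By Lemma~\ref{lma:Qk} we have $Q(m)\ll m\,2^{-m}$, and $\sum_{m\ge0}Q(m) = 1/\alpha$ is an absolute constant by \eqref{eq:alphaQ}; since $P(s)^{k-m}/(k-m)! \le P(s)^k/k! \cdot (k)_m / P(s)^m$ — and more usefully $P(s)^{k-m}/(k-m)!$ is comparable to $P(s)^k/k!$ up to a factor $(k/P(s))^m$ which is $\le(k/(2k))^m\cdot(\text{const})^m$ since $P(s)>2k$ — the whole error sum is bounded by a fixed geometric-type constant times $P(s)^k/k! \cdot$ (something summable and $<1/2$). Choosing the constant $e^{-2k}$ generously enough (it is absurdly small), the error is $\le \tfrac12\,P(s)^k/k! \le \frac{1}{k!}\log(\tfrac1{s-1})^k - \frac{P(s)^k}{k!}$... here one must be slightly careful to package the inequalities so that $P(s)^k/k! + (\text{error}) \le \log(\tfrac1{s-1})^k/k!$; the cleanest route is to show directly that the error is at most $\log(\tfrac1{s-1})^k/k! - P(s)^k/k!$, which by the bound $P(s)\ge\log\tfrac1{s-1}-1$ is at least $\big(1-(1-\tfrac1{2k})^k\big)\log(\tfrac1{s-1})^k/k! \gg \frac{1}{k}\log(\tfrac1{s-1})^k/k!$, and the latter comfortably beats the $O(2^{-m})$-summable error once $\log\tfrac1{s-1}$ exceeds a small multiple of $k$.

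The main obstacle is bookkeeping the two competing errors at the right scale: one needs the gap $\log(\tfrac1{s-1})^k - P(s)^k$ (which is of relative size $\asymp 1/k$, not constant) to absorb the partition-error sum. This forces the threshold $\log\tfrac1{s-1} \gg k$, which is exactly why the statement is restricted to $s < 1+e^{-2k}$ (or any $1+e^{-ck}$ with $c$ large enough); I expect the proof to reduce to verifying that $e^{-2k}$ is small enough that $\big(1-\tfrac1{2k}\big)^k$-type factors and the constant $\sum Q(m)=1/\alpha$ fit together, which is routine once set up. A technical point to handle cleanly is the uniformity of the estimate $P(s)=\log\tfrac1{s-1}+O(1)$ on $(1,2]$ — but since we only ever need the one-sided bound $\log\tfrac1{s-1}-1 < P(s) < \log\tfrac1{s-1}$ on this interval, which follows from $0<P(s)-\log(\tfrac{\alpha}{s-1})<1.4(s-1)$ established before Theorem~\ref{thm:Pkint} together with $\alpha<1$, no new analysis is needed.
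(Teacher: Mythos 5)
Your proposal is a genuinely different route from the paper's. The paper proves the lemma by induction on $k$, inserting the inductive hypothesis into the recursion $P_k(s)=\frac{1}{k}\sum_{j=1}^k P(js)P_{k-j}(s)$ from \eqref{eq:Pkrecur}; you instead work directly from the partition identity \eqref{eq:Pkpart}, isolating the leading term $P(s)^k/k!$ and bounding the remaining partition contributions by $\sum_{m\ge 2}\frac{P(s)^{k-m}}{(k-m)!}Q(m)$ with $Q$ as in Lemma~\ref{lma:Qk}. That reduction is sound, and the general strategy of absorbing the error into the gap between $\log(\tfrac{1}{s-1})^k$ and $P(s)^k$ is the right one.

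However, the quantitative step where you try to lower bound the gap is backwards. You invoke the \emph{lower} bound $P(s)\ge\log\tfrac{1}{s-1}-1$ to deduce $\log(\tfrac{1}{s-1})^k-P(s)^k\ge\big(1-(1-\tfrac{1}{2k})^k\big)\log(\tfrac{1}{s-1})^k$; a lower bound on $P(s)$ can only yield an \emph{upper} bound on this difference. Worse, the claimed inequality is equivalent to $P(s)\le(1-\tfrac{1}{2k})\log\tfrac{1}{s-1}$, i.e.\ $\log\tfrac{1}{s-1}-P(s)\ge\frac{1}{2k}\log\tfrac{1}{s-1}\ge 1$, which is false: as $s\to 1^+$ the left side tends to $c=0.315\cdots<1$. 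So the ``routine'' verification at the end is not routine — as written it does not close.

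The fix uses the \emph{upper} bound $P(s)<\log\tfrac{1}{s-1}-c+1.4(s-1)$ established before Theorem~\ref{thm:Pkint}. Writing $L=\log\tfrac{1}{s-1}>2k$ and $P=P(s)$, this gives $L-P>c-1.4e^{-2k}>0.29$ for $k\ge 2$, hence by the factorization $L^k-P^k=(L-P)\sum_{i<k}L^iP^{k-1-i}>(L-P)\,kP^{k-1}$ the gap satisfies $\frac{L^k-P^k}{k!}>\frac{0.29\,k}{P}\cdot\frac{P^k}{k!}$. On the other side, since $P>L-c>k$ one has $\frac{k!}{(k-m)!\,P^m}<(k/P)^m\le(k/P)^2$ for $m\ge 2$, so the partition error is at most $(k/P)^2\big(\sum_{m\ge 2}Q(m)\big)\cdot\frac{P^k}{k!}=(k/P)^2(e^c-1)\cdot\frac{P^k}{k!}$ via \eqref{eq:alphaQ}. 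The required inequality then reduces to $\frac{k}{P}(e^c-1)\le 0.29$; since $k/P<\frac{1}{2-0.32/k}\le 0.55$ for $k\ge 2$ and $0.55\,(e^c-1)\approx 0.20<0.29$, it holds, and $k=1$ is the immediate base case $P(s)\le\log\tfrac{1}{s-1}$. With that correction your approach works, but as submitted the key estimate is inverted.
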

\begin{proof}
We proceed by induction on $k$. For $k=1$, $P(s) \le \log(\tfrac{1}{s-1})$ holds for all $s>1$. Now assuming the claim holds for all $j<k$, the recursion \eqref{eq:Pkrecur} gives
\begin{align*}
P_k(s) = \frac{1}{k}\sum_{j=1}^k P(js)P_{k-j}(s) \le \frac{1}{k}\sum_{j=1}^k \log\big(\tfrac{1}{js-1}\big)\frac{1}{(k-j)!}\log\big(\tfrac{1}{s-1}\big)^{k-j} \le \frac{1}{k!}\log\big(\tfrac{1}{s-1}\big)^k
\end{align*}
which completes the induction, provided $\log\big(\tfrac{1}{js-1}\big) \le \frac{(k-j)!}{k!}\log\big(\tfrac{1}{s-1}\big)^j$. This is equality for $j=1$, and for $j\ge2$ it suffices to prove
\begin{align}\label{eq:logdom}
\log\left(\frac{1}{(j+1)s-1}\right) \le \frac{1}{k-j}\log\big(\tfrac{1}{s-1}\big)\log\left(\frac{1}{js-1}\right) \qquad 2\le j<k, s\in(1,1+e^{-2k}).
\end{align}
Letting $z=js-1$, we seek $s$ so that $\frac{1}{k-j}\log\big(\tfrac{1}{s-1}\big)$ is at least
\begin{align*}
\frac{\log(z+s)}{\log z} = 1 + \log(1+\tfrac{s}{z}) \le 1+\frac{s}{z} = 1+\frac{1}{j-1/s} \le 2
\end{align*}
using $j\ge2, s\ge1$. Hence \eqref{eq:logdom} holds since $s-1<e^{-2k}$.
\end{proof}

\subsection{Open questions}
From the data in Figures \ref{fig:fNk},\ref{fig:fNk2} one might suspect that $f(\N_k)$ increases monotonically to 1 for $k\ge6$, while $f(\N^*_k)$ decreases monotonically to $6/\pi^2$ for all $k\ge1$. Similar numerical experiments suggest that $f(\mathbb O_k)$ also decreases monotonically to $1/2$ for $k\ge1$, where $\mathbb O_k$ are the odd members of $\N_k$.

\section*{Acknowledgments}
The author is grateful to Carl Pomerance for many helpful discussions, particularly regarding the proof of Theorem \ref{thm:fNksim1}. The author would also like to thank Paul Kinlaw for his input and verification of the computations in Figure \ref{fig:fNk}, and to the referee for useful comments. The author is supported by a Churchill Scholarship at the University of Cambridge and a Clarendon Scholarship at the University of Oxford.

\bibliographystyle{amsplain}

\end{document}